\documentclass[11pt]{article}
\usepackage[T1]{fontenc}
\usepackage[utf8]{inputenc}
\usepackage{lmodern}
\usepackage[a4paper,margin=1in]{geometry}
\usepackage{amsmath,amssymb,amsthm,mathtools}
\usepackage{microtype,graphicx,booktabs,enumitem}
\usepackage[numbers,sort&compress]{natbib}
\usepackage{xcolor}
\usepackage[colorlinks=true,linkcolor=blue!50!black,citecolor=blue!50!black,urlcolor=blue!50!black]{hyperref}
\usepackage[nameinlink,noabbrev]{cleveref}
\numberwithin{equation}{section}
\newtheorem{theorem}{Theorem}[section]
\newtheorem{lemma}[theorem]{Lemma}
\newtheorem{proposition}[theorem]{Proposition}
\newtheorem{corollary}[theorem]{Corollary}
\theoremstyle{remark}
\newtheorem{remark}[theorem]{Remark}
\newcommand{\R}{\mathbb R}

\newcommand{\B}{B_2^n}
\newcommand{\E}{\mathcal E}
\DeclareMathOperator{\vol}{vol}
\DeclareMathOperator{\tr}{tr}
\DeclareMathOperator{\diag}{diag}
\DeclareMathOperator{\interior}{int}
\newcommand{\rstar}{r_{\!*}}
\setlist{itemsep=3pt,topsep=5pt}
\newcommand{\paperauthors}{%
Zhou Longfei\thanks{\href{mailto:longfei.gabriel.zhou@gmail.com}{\texttt{longfei.gabriel.zhou@gmail.com}}}%
\and Haijun Zou\thanks{Academy of Mathematics and Systems Science, Chinese Academy of Sciences, Beijing, China; University of Chinese Academy of Sciences, Beijing, China; \href{mailto:zouhaijun24@mails.ucas.ac.cn}{\texttt{zouhaijun24@mails.ucas.ac.cn}}}%
\and Tianhao Liu\thanks{Antai College of Economics and Management, Shanghai Jiao Tong University; \href{mailto:tianhao.liu@sjtu.edu.cn}{\texttt{tianhao.liu@sjtu.edu.cn}}}}

\title{A Spectral Proof of Khachiyan's Ellipsoid Conjecture}
\author{\paperauthors}
\date{September 23, 2026}
\hypersetup{pdftitle={A Spectral Proof of Khachiyan's Ellipsoid Conjecture},pdfauthor={Zhou Longfei, Haijun Zou, Tianhao Liu},pdfsubject={Sharp ellipsoid cutting bound with Lean 4 formal verification}}

\begin{document}
\maketitle
\begin{abstract}
For a convex body $K\subset\R^n$, let $w(K)$ denote the volume of its maximum-volume inscribed ellipsoid. We prove that every closed halfspace $H$ whose boundary passes through the center of the maximizing ellipsoid satisfies
\[
 w(K\cap H)\le\frac{\sqrt e}{2}\,w(K).
\]
The constant is optimal uniformly over all dimensions, as witnessed by a family of circular cones, thereby establishing Khachiyan's conjecture.

\medskip\noindent The proof converts containment, maximality, and the central-cut condition into algebraic constraints on positive definite matrices. Two complementary spectral bounds from a diagonal model extend to arbitrary center displacements through a directional rank-one estimate for fractional trace powers. Concavity determines their joint optimum. We also derive finite-dimensional bounds and a necessary condition for near equality, with self-contained supporting proofs and an alternative resolvent argument. An AI language model discovered the proof in a human-directed research process. Lean~4 with mathlib verifies the main theorem, sharpness, and supporting results (\href{https://github.com/DrZhouKarl/KhachiyanEllipsoidConjecture}{GitHub source}).
\end{abstract}

\noindent\textbf{Keywords:} maximum-volume inscribed ellipsoid; Khachiyan's conjecture; convex body; cutting plane; fractional trace power.

\section{Main result and proof strategy}\label{sec:introduction}

A hyperplane through the center of a body's maximum-volume inscribed ellipsoid determines two closed half-bodies. We prove that the maximum-volume ellipsoid inscribed in either half-body has volume at most $\rstar=\sqrt e/2=0.8243606353\ldots$ times that of the original maximizing ellipsoid, and that this dimension-independent constant is optimal.

To state the result precisely, let $K\subset\R^n$ be compact and convex with nonempty interior, and write
\begin{equation}\label{eq:w}
 w(K)=\max\{\vol(E):E\subset K,\ E\text{ is an ellipsoid}\}.
\end{equation}
The maximizing ellipsoid is unique. Denote it by $J(K)$ and its center by $c(K)$; we use the customary name \emph{John ellipsoid}. Existence and uniqueness are proved in \cref{app:ellipsoids}.

\begin{theorem}\label{thm:main}
Let $n\ge1$, let $K\subset\R^n$ be a convex body, and let $H$ be a closed halfspace whose boundary contains $c(K)$. Then
\begin{equation}\label{eq:main}
 w(K\cap H)\le\rstar\,w(K),\qquad \rstar=\frac{\sqrt e}{2}.
\end{equation}
Moreover, $\rstar$ is the smallest constant for which this inequality holds simultaneously for all dimensions, convex bodies, and such halfspaces.
\end{theorem}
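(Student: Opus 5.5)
We reduce to a normalized configuration and then do a finite-dimensional spectral optimization. By affine invariance of the ratio $w(K\cap H)/w(K)$, assume $J(K)=\B$ with $c(K)=0$ and $H=\{x:x_1\ge 0\}$. Let $E=A\B+a$ be the John ellipsoid of $K\cap H$, with $A$ positive definite. The task is to show $\det A\le\rstar$.

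The constraints come from three sources. First, containment: $E\subset H$ gives $a_1\ge |Ae_1|$. Second, $E\subset K$ gives $\det A\le 1$, but this is too weak. Third, and most important, maximality of $\B$ in $K$: the convex hull of $\B$ and $E$ lies in $K$, so every ellipsoid inside $\operatorname{conv}(\B\cup E)$ has volume at most $\vol(\B)$. For $t\in[0,1]$, the Minkowski combination $(1-t)\B+tE$ contains the ellipsoid $\big((1-t)I+tA\big)\B+ta$. Balls slightly shifted toward $-a$, combined with $E$, give more room; the direction $a$ is pinned away from $0$ because $a_1\ge|Ae_1|$.

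The key step is to build an explicit family of competitor ellipsoids inside $\operatorname{conv}(\B\cup E)$, parametrized by $t$ and a "stretch" along $a$. Requiring their volumes to be at most $\vol(\B)$ yields the inequalities
\[
\det\!\big((1-t)I+tA+s\,uu^{\top}\big)\le 1,\qquad u=a/|a|,
\]
where the admissible $s$ grows linearly in $t|a|$. Letting $t\to0$ gives first-order conditions of the form $\tr(A-I)+c|a|\le0$. Higher-order or optimized choices give conditions involving fractional trace powers $\tr A^{p}$. I would first settle the diagonal model, where $A$ commutes with $uu^\top$, e.g.\ $a\parallel e_1$. There, maximizing $\prod\lambda_i$ under these constraints reduces to one scalar parameter $\lambda_1$ (the cone-like direction) and $n-1$ equal eigenvalues, by AM--GM/concavity of $\log$. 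As $n\to\infty$ the optimum tends to $\tfrac{\sqrt e}{2}$, mirroring the $(1+1/n)^{n}$-type limit. I expect the main obstacle to be the general case where $a$ is not an eigenvector of $A$. The plan there is a directional rank-one estimate: show that the worst case of $\det$ subject to the trace-power constraints, over rotations of $u$ relative to the eigenbasis, occurs in the aligned configuration. This would use matrix-determinant-lemma expressions $\det(M+suu^\top)=\det M\,(1+s\,u^\top M^{-1}u)$ together with convexity of $u^\top M^{-1}u$ in the eigen-weights. Two complementary bounds, one good for small $|a|$ and one for large, are then combined, and a concavity argument in the remaining scalar variable locates the joint optimum at $\rstar$.

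For sharpness, take circular cones $K_n=\{(x_1,x')\in\R^n: |x'|\le \alpha(h-x_1),\ x_1\ge0\}$. Their John ellipsoid is computable by symmetry, with center on the axis at height $h/(n+1)$-type proportions. Cut by the hyperplane through $c(K_n)$ orthogonal to the axis, keeping the apex side. That piece is a smaller homothetic cone, scaled by $1-\theta_n$, so the ratio is explicitly $(1-\theta_n)^n$ times a constant from the normalization. Computing it shows the ratio tends to $\sqrt e/2$ as $n\to\infty$. Hence no smaller constant works uniformly in dimension.
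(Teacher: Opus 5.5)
\textbf{Upper bound: the competitor step fails.} You never specify the competitor ellipsoids in $\operatorname{conv}(\B\cup E)$. The one concrete consequence you state, a first-order condition $\tr(A-I)+c|a|\le0$ with a dimension-free $c>0$, is false. Your derivation uses only $E\subset K\cap H$ and maximality of $\B$, so it would apply to the paper's cone ellipsoid $E_n$. That ellipsoid has center $-\tfrac12e_1$ and shape $A=\diag(\tfrac12,\sqrt{n/(n-1)},\dots,\sqrt{n/(n-1)})$, and it lies in $K_n\cap\{t\le0\}$ with $J(K_n)=\B$. There $|a|=\tfrac12$ but $\tr(A-I)=\tfrac12+\sqrt{n(n-1)}-n\approx-1/(8n)$, which forces $c=O(1/n)$. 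So the gain cannot come from a rank-one stretch of the convex-combination path.

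In the paper the gain comes from a \emph{finite} competitor. The intermediate ellipsoid $\E(a/2,T_1)$, with $T_1=(A+aa^\top/4)^{1/2}$, lies in $\operatorname{conv}(\B\cup E)$; this is proved by completing a square in the support function. So does its iterate $\E(a/4,T_2)$, with $T_2=(T_1+aa^\top/16)^{1/2}$. Applying $\tr S\le n$ to these gives $f_j(\alpha)+\sum_{i\ge2}\lambda_i^{1/2^j}\le n$. The inequality $\log z\le z-1$ then turns this into $\det A\le R_j(\alpha)$ in every dimension, not merely as $n\to\infty$. Neither constraint alone suffices ($\max R_1\approx0.8443$, $\max R_2\approx0.8249$). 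Only $\max_\alpha\min\{R_1,R_2\}$ equals $\sqrt e/2$, and this is certified by concavity in $\log\alpha$ and the two tangents at $\alpha=\tfrac12$.

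For non-aligned $a$, a determinant-lemma comparison does not control the actual constraints. These are traces of square roots of rank-one perturbations, and the second is taken over $T_1$, which need not commute with $aa^\top$. The paper rescales to $b=a/\|A^{-1}a\|=Au$ with $\|u\|=1$, which is harmless by operator monotonicity of the square root. It then proves $\tr((M+tbb^\top)^p)-\tr(M^p)\ge(m+t\alpha^2)^p-m^p$ whenever $M\preceq(m/\alpha^2)A^2$. The determinant lemma enters only in the alternative proof, through an integral identity expressing $\tr(N^p)-\tr(M^p)$ via $\log\bigl(1+t\,b^\top(M+sI)^{-1}b\bigr)$.

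\textbf{Sharpness: you keep the wrong half.} The John center of a circular cone sits at height $h/(n+1)$ above the base. So the apex-side piece is homothetic to $K_n$ with factor $n/(n+1)$, and its ratio is exactly $(n/(n+1))^n\to e^{-1}\approx0.37$, nowhere near $\sqrt e/2$. The extremal piece is the base-side frustum. In the normalization $K_n=\{(t,y):t\ge-1,\ t+\sqrt{n^2-1}\,\|y\|\le n\}$ you must certify $J(K_n)=\B$; the paper does this with explicit contact weights. You must also exhibit $E_n\subset K_n\cap\{t\le0\}$, whose volume ratio $\tfrac12\bigl(n/(n-1)\bigr)^{(n-1)/2}$ tends to $\sqrt e/2$.
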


Let $H^+$ and $H^-$ be the two closed halfspaces determined by the cutting hyperplane. The theorem bounds $w(K\cap H^+)$ and $w(K\cap H^-)$ separately. The functional $w$ is not generally additive under this decomposition: the identity $w(K\cap H^+)+w(K\cap H^-)=w(K)$ does not hold for arbitrary convex bodies. The proof must account for both center displacement and changes in the semiaxes of the new maximizing ellipsoids.

The constant was conjectured by \citet{Khachiyan1990}, who proved a bound of $0.844\ldots$. The earlier inscribed-ellipsoid method of \citet{TarasovKhachiyanErlikh1988} introduced an intermediate-ellipsoid construction, its recursive application, and the cone example with limiting ratio $\sqrt e/2$. The sharp estimate below follows from retaining \emph{two complementary constraints} supplied by that construction. A diagonal model makes their relationship explicit.

After normalizing $J(K)$ to the unit ball, a candidate ellipsoid has the form $a+A\B$ and its relative volume is $\det A$. If its center lies along a shortest principal axis, the two intermediate ellipsoids admit explicit diagonal representations. Each yields an upper bound on $\det A$ in terms of the same smallest semiaxis $\alpha$. Neither bound alone gives the optimal constant, but the maximum of their pointwise minimum does. For an arbitrary displacement, the intermediate matrices need not be simultaneously diagonalizable. A rank-one matrix inequality nevertheless establishes the same trace estimates, which suffice for the determinant argument.

The argument thus converts geometric feasibility into algebraic constraints: containment supplies feasible shape matrices, maximality bounds their traces, and a common spectral parameter relates the two estimates. Concavity certifies the final optimization, while the cone family establishes optimality. A Lean~4 formalization verifies the main theorem and its supporting results; \cref{app:formal-verification} records the scope and reproducibility of the verification.

\paragraph{Organization of the proof.}
\Cref{sec:architecture} derives the two scalar bounds in the diagonal model. \Cref{sec:upper} establishes the trace comparison needed for arbitrary displacements, and \cref{sec:sharpness} proves sharpness. Together, these sections present the complete argument for the main result. Supporting lemmas are stated at their first use, with proofs collected in \cref{sec:estimates}. Further consequences appear in \cref{sec:consequences}, and related work and open questions in \cref{sec:history}. The appendices supply the standard matrix and ellipsoid facts and an alternative proof of the rank-one estimate.

\section{Two spectral bounds from a diagonal model}\label{sec:architecture}

\subsection{Affine normalization and the determinant ratio}
An invertible affine change of coordinates takes $J(K)$ to the Euclidean unit ball $\B$ and its center to $0$. It preserves containment and multiplies every volume by the same factor. We may therefore assume $J(K)=\B$ and write a candidate ellipsoid as
\[
 \E(a,A)=a+A\B,\qquad A=A^\top\succ0.
\]
Every nonsingular ellipsoid admits this representation: $C\B=(CC^\top)^{1/2}\B$ for any nonsingular $C$. The eigenvalues of $A$ are its semiaxis lengths, so
\[
 \frac{\vol(\E(a,A))}{w(K)}=\det A.
\]
We use the Loewner order for symmetric matrices and spectral calculus for their powers. Thus $\tr(A^p)$ means the trace after taking the matrix power.

The central-cut condition has an equivalent algebraic formulation. An ellipsoid contained in a closed halfspace whose boundary passes through $0$ cannot contain $0$ in its interior. In ellipsoid coordinates, this condition is $\|A^{-1}a\|\ge1$.

\begin{proposition}[Center-exclusion formulation]\label{prop:exclusion}
The upper bound in \cref{thm:main} is equivalent to the implication
\begin{equation}\label{eq:excluded-center}
 \E(a,A)\subset K,\quad J(K)=\B,\quad \|A^{-1}a\|\ge1
 \quad\Longrightarrow\quad\det A\le\rstar.
\end{equation}
\end{proposition}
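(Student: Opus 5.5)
We prove the two implications separately, after the affine normalization of the preceding subsection. Such a map preserves containment, centers, halfspaces through the image of $c(K)$, and all volume ratios, so we may assume $J(K)=\B$ and $c(K)=0$ on both sides. The key geometric fact is the following elementary observation. For an ellipsoid $\E(a,A)$ we have
\[
 0\in\interior\E(a,A)\iff\|A^{-1}(0-a)\|<1\iff\|A^{-1}a\|<1 .
\]
This holds because $\E(a,A)=\{x:\|A^{-1}(x-a)\|\le1\}$, with interior given by strict inequality since $A$ is nonsingular.

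\emph{Implication \eqref{eq:excluded-center} $\Rightarrow$ the bound of \cref{thm:main}.} Let $H$ be a closed halfspace with $0\in\partial H$. If $K\cap H$ has empty interior, it contains no nondegenerate ellipsoid and the bound is trivial, interpreting $w$ as $0$ in that case. Otherwise let $\E(a,A)=J(K\cap H)$, which exists by \cref{app:ellipsoids}. Then $\E(a,A)\subset K\cap H\subset K$. Since $0\in\partial H$ and $H$ is closed and convex, $0\notin\interior H\supset\interior\E(a,A)$. By the fact above, $\|A^{-1}a\|\ge1$, and \eqref{eq:excluded-center} gives
\[
 w(K\cap H)/w(K)=\det A\le\rstar.
\]

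\emph{Bound of \cref{thm:main} $\Rightarrow$ \eqref{eq:excluded-center}.} Let $\E(a,A)\subset K$ with $\|A^{-1}a\|\ge1$, so that $0\notin\interior\E(a,A)$. We must produce a closed halfspace $H$ with $0\in\partial H$ and $\E(a,A)\subset H$. Then $\E(a,A)\subset K\cap H$, and so
\[
 \det A=\vol\E(a,A)/w(K)\le w(K\cap H)/w(K)\le\rstar .
\]
The halfspace comes from separating the point $0$ from the open convex set $\interior\E(a,A)$. Explicitly, $0$ is on or outside the boundary in ellipsoid coordinates, so take the normal $u=A^{-2}a$ (a nonzero vector, as $a\neq0$) and set $H=\{x:\langle u,x\rangle\ge0\}$. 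For $x=a+Ay$ with $\|y\|\le1$ we get
\[
 \langle A^{-2}a,a+Ay\rangle=\|A^{-1}a\|^2+\langle A^{-1}a,y\rangle\ge\|A^{-1}a\|^2-\|A^{-1}a\|\ge0,
\]
using Cauchy--Schwarz and $\|A^{-1}a\|\ge1$. This explicit choice avoids invoking an abstract separation theorem.

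The only subtle points are bookkeeping rather than difficulty. First, the normalization must be applied consistently, which is handled by affine invariance. Second, the degenerate case where $K\cap H$ has empty interior must be dealt with; it cannot actually occur, since $0\in\interior K$ forces $K\cap H$ to have nonempty interior. I expect the explicit separating normal $A^{-2}a$ to be the only step requiring an actual computation.
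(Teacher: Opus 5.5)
Your proof is correct and follows the paper's argument essentially verbatim. The forward direction uses that the maximal ellipsoid of $K\cap H$ cannot contain $0$ in its interior, hence $\|A^{-1}a\|\ge1$, and the converse uses the same explicit normal $v=A^{-2}a$ with $\min_{x\in\E(a,A)}v^\top x=\rho^2-\rho\ge0$; your added remark that $K\cap H$ has nonempty interior (since $\B\subset K$) is harmless bookkeeping the paper leaves implicit.
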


The equivalence is proved in \cref{sec:basic-proofs}. It expresses the geometric restriction through a displacement norm, eliminating the cutting normal from the subsequent estimates.

\subsection{Two geometric ingredients}
The construction uses two consequences of geometric feasibility. First, maximality of the unit ball bounds the trace of every feasible shape matrix.

\begin{lemma}[Trace constraint]\label{lem:trace}
If $J(K)=\B$ and $\E(b,S)\subset K$ with $S\succ0$, then $\tr S\le n$.
\end{lemma}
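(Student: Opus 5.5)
We prove the trace constraint by averaging the two ellipsoids $\B$ and $\E(b,S)$ and comparing volumes with the maximal one. The averaging construction is the intermediate-ellipsoid idea of \citet{TarasovKhachiyanErlikh1988}.

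\emph{Step 1: a feasible family.} Fix $t\in[0,1]$ and set
\[
 E_t=(1-t)\B+t\,\E(b,S)=tb+\bigl((1-t)I+tS\bigr)\B .
\]
The second equality is the key identity. For $x\in\B$ the Minkowski combination contains $tb+(1-t)x+tSx=tb+((1-t)I+tS)x$. Conversely, it is contained in $tb+(1-t)\B+tS\B$. Since $(1-t)I$ and $tS$ are both positive semidefinite, the support function of $((1-t)I+tS)\B$ is $u\mapsto\|((1-t)I+tS)u\|$. This is at most $(1-t)\|u\|+t\|Su\|$ by the triangle inequality, which is the support function of $(1-t)\B+tS\B$. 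That inequality points the wrong way, so we do not rely on equality of the two sets. It suffices that $tb+((1-t)I+tS)\B$ is a subset of the Minkowski combination, and this is the easy inclusion already shown. Since $K$ is convex and contains both $\B$ and $\E(b,S)$, it contains $E_t$.

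\emph{Step 2: maximality.} Because $J(K)=\B$ has maximum volume among ellipsoids in $K$, we get
\[
 f(t)=\log\det\bigl((1-t)I+tS\bigr)\le0 \quad\text{for all } t\in[0,1],
\]
with $f(0)=0$. Hence the right derivative at $0$ is nonpositive:
\[
 f'(0^+)=\tr(S-I)=\tr S-n\le0.
\]
This is exactly $\tr S\le n$.

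We could also avoid derivatives by using the AM--GM inequality on eigenvalues. Suppose instead that $\tr S>n$. Then $\det((1-t)I+tS)\ge\dots$ would need a lower bound, which AM--GM does not supply, so the derivative argument is cleaner. The derivative formula $\frac{d}{dt}\log\det(I+t(S-I))|_{t=0}=\tr(S-I)$ is standard (Jacobi's formula). It can also be seen directly from the expansion $\det(I+tM)=1+t\tr M+O(t^2)$.

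\emph{Main obstacle.} The only delicate point is Step 1: confirming that the averaged ellipsoid really lies in $K$ and has shape matrix $(1-t)I+tS$ with no symmetrization issue. This works because both shape matrices are symmetric positive definite and we only need the inclusion into the Minkowski combination. Everything else is first-order calculus at $t=0$.
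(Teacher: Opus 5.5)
Your argument is correct and is essentially the paper's proof: the convex-combination path $tb+((1-t)I+tS)\B\subset K$, followed by the right derivative $\tr(S-I)\le0$ of $\log\det$ at $t=0$. One small correction: the displayed equality $E_t=tb+((1-t)I+tS)\B$ is false in general, since the Minkowski average of two ellipsoids is usually not an ellipsoid; only the inclusion $\supseteq$ is needed, and you prove it directly. Your support-function triangle inequality does not point the wrong way; it confirms exactly that inclusion.
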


The second constructs a new feasible ellipsoid from the unit ball and any other ellipsoid inside $K$.

\begin{lemma}[Intermediate ellipsoid]\label{lem:intermediate}
If $\B\subset K$ and $\E(b,S)\subset K$, where $S\succ0$, then
\begin{equation}\label{eq:intermediate}
 \E\!\left(\frac b2,\left(S+\frac14bb^\top\right)^{1/2}\right)\subset K.
\end{equation}
\end{lemma}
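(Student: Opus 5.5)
The plan is to compare support functions. Since $K$ is convex and contains both $\B$ and $\E(b,S)=b+S\B$, it contains $L=\operatorname{conv}(\B\cup\E(b,S))$, whose support function is
\[
 h_L(x)=\max\bigl(\|x\|,\ \langle b,x\rangle+\|Sx\|\bigr).
\]
A closed convex set lies inside $L$ iff its support function is dominated by $h_L$ in every direction. So it suffices to prove $h_{E'}(x)\le h_L(x)$ for all $x$, where $E'$ is the ellipsoid in \eqref{eq:intermediate}. With $M=(S+\frac14bb^\top)^{1/2}$ we have $\|Mx\|^2=\|Sx\|^2+\frac14\langle b,x\rangle^2$. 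Writing $\beta=\langle b,x\rangle$, $s=\|Sx\|$ and $r=\|x\|$, the required inequality is
\[
 \tfrac{\beta}{2}+\sqrt{s^2+\tfrac{\beta^2}{4}}\ \le\ \max(r,\ \beta+s).
\]

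The left side is the positive root $y_*$ of $g(y)=y^2-\beta y-s^2$, and $g$ is increasing for $y\ge\beta/2$. I will first treat the case $\beta\ge0$. Put $m=\beta+s$. Then $m\ge\beta/2$ and $g(m)=ms\ge s^2$, hence $m\ge y_*$. This case needs only the ellipsoid $\E(b,S)$, not $\B$.

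The main obstacle is the case $\beta<0$. There $y_*=\sqrt{s^2+\beta^2/4}-|\beta|/2\le s$, while $\beta+s<s$. So the bound must come from the ball term $r$, and that requires controlling $s=\|Sx\|$ by $\|x\|$. In fact the lemma as literally stated (only $\B\subset K$) appears to need more. A one-dimensional test with $\B=[-1,1]$, $b=1$, $S=10$ gives $y_*\approx9.51$ against $h_L=\max(1,9)=9$.

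So the first thing I would try is to supply the missing hypothesis $S\preceq I$, equivalently $\|Sx\|\le\|x\|$. Under it, $y_*\le s\le r$ in the case $\beta<0$, and the proof closes. In the intended application, $J(K)=\B$ and all feasible shape matrices come from subsets of $K$. I would try to secure $S\preceq I$ there, either by checking that the ellipsoids to which the lemma is later applied satisfy it, or by using maximality of $\B$ (compare \cref{lem:trace}).

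Failing that, I would look for a different convex combination of points from $\B$ and $\E(b,S)$ that realizes the point $b/2+Mu$ directly. A natural candidate is $\frac12(u_1+b+Su_2)$ with $u_1,u_2$ unit vectors chosen so that $\frac12(u_1+Su_2)$ traces the boundary of $M\B-\frac b2\cdot0$. I expect the same one-dimensional obstruction to reappear, however. I would therefore check the precise form of the hypothesis before finalizing the argument.
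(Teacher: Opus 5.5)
\textbf{The gap is a computational slip, not a missing hypothesis.} Your support-function framework is sound, but the slip leads you to the false conclusion that the lemma needs an extra assumption. For $M=(S+\tfrac14bb^\top)^{1/2}$ you have $\|Mx\|^2=x^\top M^2x=x^\top Sx+\tfrac14\langle b,x\rangle^2$, not $\|Sx\|^2+\tfrac14\langle b,x\rangle^2$. The quantity $\|Sx\|^2=x^\top S^2x$ belongs to a different shape matrix, $(S^2+\tfrac14bb^\top)^{1/2}$.

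Your one-dimensional test is therefore not a counterexample. With $\B=[-1,1]$, $b=1$, $S=10$, the intermediate ellipsoid is $[\tfrac12-\sqrt{10.25},\,\tfrac12+\sqrt{10.25}]\approx[-2.70,3.70]$, which lies inside $[-9,11]$. The lemma holds as stated.

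The hypothesis $S\preceq I$ you propose to add is both unnecessary and harmful. Maximality of $\B$ does not force $S\preceq I$; it only gives bounds such as $\tr S\le n$. Moreover, the first ellipsoid the lemma is applied to is the candidate $\E(a,A)$, and $A\not\preceq I$ is possible: the cone witness $E_n$ has transverse semiaxes $\sqrt{n/(n-1)}>1$. Restricting the lemma would break the upper-bound argument.

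The same slip explains why your case $\beta\ge0$ seemed to need only $\E(b,S)$. Take $b=0$ and $S=\varepsilon I$ with $0<\varepsilon<1$. The conclusion is then $\sqrt{\varepsilon}\,\B\subset K$, which genuinely requires the ball.

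\textbf{The repair.} With the correct expression, your argument closes in every direction with no case split, provided you use both containments together. Fix $x\neq0$ and let $H=h_L(x)=\max(r,\beta+s)$. Cauchy--Schwarz gives $x^\top Sx\le\|x\|\,\|Sx\|=rs$. Since $0\le r\le H$ and $0\le s\le H-\beta$,
\[
 \|Mx\|^2\le rs+\tfrac14\beta^2\le H(H-\beta)+\tfrac14\beta^2=\bigl(H-\tfrac12\beta\bigr)^2 .
\]
Because $H>0$ and $H-\beta\ge0$, the quantity $H-\tfrac12\beta$ is positive. Hence $\tfrac12\beta+\|Mx\|\le H$, which is the required domination of support functions.

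This is essentially the paper's proof, which runs the same estimate with $h_K(x)$ in place of $h_L(x)$. The idea you were missing is the bound $x^\top Sx\le\|x\|\,\|Sx\|$. It couples the ball constraint and the ellipsoid constraint in one inequality, instead of treating them in separate cases.
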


Proofs appear in \cref{sec:basic-proofs,sec:intermediate-proof}. The construction is the normalized form of that in \citet{TarasovKhachiyanErlikh1988,Khachiyan1990}. The rank-one term $bb^\top/4$ encodes the center displacement in the new shape matrix, to which the trace constraint can then be applied.

Apply the construction first to $\E(a,A)$, and then to the resulting ellipsoid, always pairing it with the \emph{same unit ball}. The new centers are $a/2$ and $a/4$, and their shape matrices are
\begin{equation}\label{eq:T-definition}
 T_1=(A+aa^\top/4)^{1/2},\qquad
 T_2=(T_1+aa^\top/16)^{1/2}.
\end{equation}
The coefficient $1/16$ in the second expression results from applying the construction to the center $a/2$. Since both ellipsoids remain in $K$, \cref{lem:trace} gives two necessary conditions:
\begin{equation}\label{eq:T-traces}
 \tr T_1\le n,\qquad\tr T_2\le n.
\end{equation}

The auxiliary ellipsoids may intersect both sides of the cutting hyperplane. Their role is to provide consequences of maximality of the original ball, so only containment in $K$ is required. \Cref{fig:geometry} shows this distinction in dimension two.

\begin{figure}[tb]
 \centering
 \includegraphics[width=\textwidth]{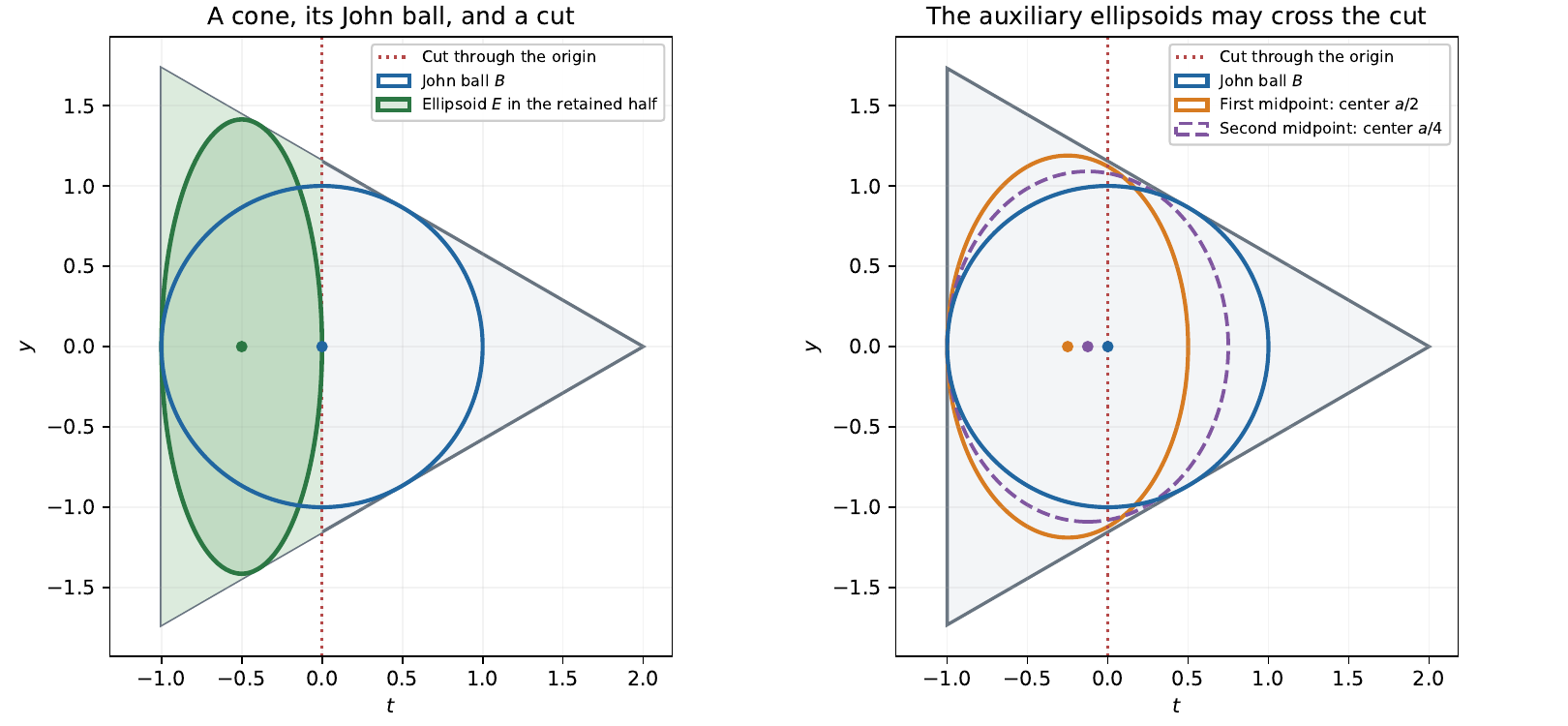}
 \caption{The construction for the two-dimensional cone used in \cref{sec:sharpness}. Left: the unit ball and an ellipsoid in the retained half. Right: the two intermediate ellipsoids remain in the cone even though they cross the cut. Their purpose is to supply the trace constraints~\eqref{eq:T-traces}.}
 \label{fig:geometry}
\end{figure}

\subsection{The aligned model}
First consider a feasible candidate satisfying
\begin{equation}\label{eq:diagonal-model}
 A=\diag(\alpha,\lambda_2,\ldots,\lambda_n),\qquad
 a=\alpha e_1,\qquad \alpha=\lambda_{\min}(A).
\end{equation}
The normalized displacement has length one: $\|A^{-1}a\|=1$. Both intermediate matrices are diagonal, and the rank-one perturbation affects only their first diagonal entry. The remaining entries are obtained by successive square roots. Consequently,
\begin{align}
 T_1&=\diag\bigl(f_1(\alpha),\lambda_2^{1/2},\ldots,\lambda_n^{1/2}\bigr),\notag\\
 T_2&=\diag\bigl(f_2(\alpha),\lambda_2^{1/4},\ldots,\lambda_n^{1/4}\bigr),\label{eq:diagonal-iterates}
\end{align}
where
\begin{equation}\label{eq:f-definition}
 f_1(\alpha)=\sqrt{\alpha+\alpha^2/4},\qquad
 f_2(\alpha)=\sqrt{f_1(\alpha)+\alpha^2/16}.
\end{equation}

Substituting these diagonal representations into the trace constraints gives
\begin{equation}\label{eq:architecture-spectrum}
 f_j(\alpha)+\sum_{i=2}^n\lambda_i^{1/2^j}\le n,
 \qquad j=1,2.
\end{equation}
These constraints have so far been established for the aligned model. \Cref{sec:upper} proves them for arbitrary center displacements. The following determinant calculation depends only on these spectral sums and therefore applies in either setting.

\subsection{From trace constraints to volume bounds}
The volume ratio is a product of relative semiaxes, whereas the trace constraints bound sums of their fractional powers. Taking logarithms and using $\log z\le z-1$ yields
\begin{align}
 \log\det A
 &=\log\alpha+2^j\sum_{i=2}^n\log(\lambda_i^{1/2^j})\notag\\
 &\le\log\alpha+2^j\left(\sum_{i=2}^n\lambda_i^{1/2^j}-(n-1)\right)\notag\\
 &\le\log\alpha+2^j(1-f_j(\alpha)).\label{eq:logdet-bound}
\end{align}
The contribution of the $n-1$ remaining semiaxes cancels the dimensional term in the trace bound. The resulting estimate depends only on $\alpha$:
\begin{equation}\label{eq:R-definition}
 \det A\le\min\{R_1(\alpha),R_2(\alpha)\},\qquad
 R_j(\alpha)=\alpha\exp\!\bigl(2^j(1-f_j(\alpha))\bigr).
\end{equation}

Both bounds constrain the same ellipsoid and therefore the same value of $\alpha$. The maximum of $R_1$ alone is $0.8442948676\ldots$, the constant in Khachiyan's 1990 argument. The maximum of $R_2$ alone is approximately $0.8248795633$. Both exceed the sharp constant. The improvement follows from bounding their pointwise minimum, as illustrated in \cref{fig:bounds}, rather than optimizing the two bounds separately.

At $\alpha=1/2$, the distinguished semiaxes and the two bounds have the exact values
\[
 f_1(1/2)=\frac34,\qquad f_2(1/2)=\frac78,
 \qquad R_1(1/2)=R_2(1/2)=\frac{\sqrt e}{2}.
\]
The following lemma gives the required \emph{global} conclusion. Its proof in \cref{sec:scalar-proof} uses concavity in the variable $\log\alpha$; the plot is only an illustration.

\begin{lemma}[Joint scalar bound]\label{lem:scalar}
For the functions defined in~\eqref{eq:R-definition},
\begin{equation}\label{eq:scalar-sharp}
 \max_{\alpha>0}\min\{R_1(\alpha),R_2(\alpha)\}
 =\frac{\sqrt e}{2},
\end{equation}
with the maximum attained at $\alpha=1/2$.
\end{lemma}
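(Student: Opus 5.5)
The plan is to work with logarithms. Set $t=\log\alpha$ and define
\[
g_j(t)=\log R_j(e^t)=t+2^j\bigl(1-f_j(e^t)\bigr),\qquad j=1,2,
\]
so that $\max_{\alpha>0}\min\{R_1,R_2\}=\exp\max_t\min\{g_1,g_2\}$.

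The first step is to show that each $g_j$ is concave in $t$. Since $t$ is linear, this amounts to showing that $t\mapsto f_j(e^t)$ is convex.

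For $j=1$ we have $f_1(e^t)=\sqrt{e^t+e^{2t}/4}=\tfrac12\sqrt{e^{2t}+4e^{t}}$. The square root of a sum of exponentials $\sum c_k e^{k t}$ with $c_k>0$ is log-convex, because $\log\sum c_ke^{kt}$ is convex (log-sum-exp). A log-convex function is convex.

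For $j=2$, $f_2(e^t)=\sqrt{f_1(e^t)+e^{2t}/16}$. I would again aim for log-convexity. The function $f_1(e^t)$ is log-convex and $e^{2t}/16$ is log-convex, and sums of log-convex functions are log-convex (a standard fact following from Hölder or Cauchy–Schwarz). Taking a square root halves the logarithm, so $f_2(e^t)$ is log-convex and hence convex.

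This yields concavity of both $g_j$, and therefore of $h=\min\{g_1,g_2\}$.

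The second step is to locate the maximizer of the concave function $h$ at $t_0=\log(1/2)$. The paper's computation gives $g_1(t_0)=g_2(t_0)=\log(\sqrt e/2)$. For a minimum of two concave functions that cross at $t_0$, it suffices to show that $g_1'(t_0)$ and $g_2'(t_0)$ have opposite signs. Concretely, I expect one derivative to be $\ge0$ and the other $\le0$. Then $h$ has a superdifferential containing $0$ at $t_0$, that is, $0\in[\min g_j'(t_0),\max g_j'(t_0)]$, and concavity makes $t_0$ a global maximizer.

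The derivatives are
\[
g_j'(t)=1-2^j\,\alpha f_j'(\alpha).
\]
We have $f_1'(\alpha)=(1+\alpha/2)/(2f_1)$. At $\alpha=1/2$ this gives $(5/4)/(3/2)=5/6$, so $g_1'=1-2\cdot\tfrac12\cdot\tfrac56=1/6>0$.

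Next, $f_2'=(f_1'+\alpha/8)/(2f_2)$. At $\alpha=1/2$ this is $(5/6+1/16)/(7/4)=(43/48)\cdot(4/7)=43/84$, so $g_2'=1-4\cdot\tfrac12\cdot\tfrac{43}{84}=1-\tfrac{43}{42}=-1/42<0$.

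The signs are opposite, as required. Hence $\max h=h(t_0)$, and exponentiating gives \eqref{eq:scalar-sharp} with the maximum attained at $\alpha=1/2$.

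The main obstacle I anticipate is the convexity of $f_2(e^t)$, because it is a nested radical. If the log-convexity argument for sums turns out to be awkward to present, my fallback is to differentiate twice directly in $t$ and verify nonnegativity. Writing $u=f_1(e^t)$, we have $(f_2^2)''=u''+e^{2t}/4\ge0$ and $f_2''=\bigl((f_2^2)''f_2^2-\tfrac12((f_2^2)')^2\bigr)/(2f_2^3)$. The remaining check is the inequality $2(u''+e^{2t}/4)(u+e^{2t}/16)\ge(u'+e^{2t}/8)^2$. This follows from $2u''u\ge u'^2$ (that is, $u^2$ has $u^{2}$ log-convex structure, which holds since $u=\tfrac12\sqrt{e^{2t}+4e^t}$ is log-convex) combined with Cauchy–Schwarz on the cross terms.

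Everything else in the proof consists of exact rational evaluations at $\alpha=1/2$.
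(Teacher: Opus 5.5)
Your proposal is correct and follows essentially the same route as the paper: concavity of $\log R_j(e^x)$ via log-convexity of $f_j(e^x)$ (your ``sums of log-convex functions are log-convex'' is the paper's composition with the convex, nondecreasing $\log(e^s+e^t)$), followed by the crossing at $x_0=-\log 2$ with value $\tfrac12-\log 2$ and slopes $1/6$ and $-1/42$. Your superdifferential formulation of the final step is equivalent to the paper's two tangent-line bounds.
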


\begin{figure}[tb]
 \centering
 \includegraphics[width=.94\textwidth]{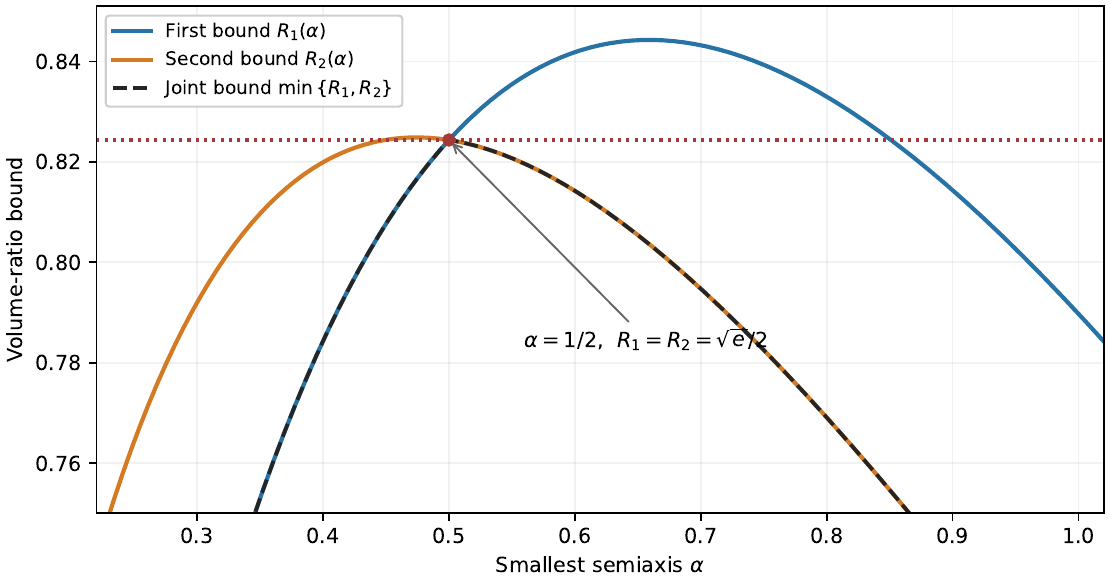}
 \caption{The pointwise minimum of $R_1$ and $R_2$ attains its maximum at $\alpha=1/2$, although neither function individually attains its maximum there. The global bound is proved in \cref{sec:scalar-proof}.}
 \label{fig:bounds}
\end{figure}

This establishes the upper-bound argument for the diagonal model, subject to the supporting lemmas proved in \cref{sec:estimates}. Extending the argument requires only the spectral constraints~\eqref{eq:architecture-spectrum} for arbitrary center displacements; the subsequent determinant calculation is unchanged.

\section{Extension to arbitrary center displacements}\label{sec:upper}

The determinant calculation in the preceding section depends only on the eigenvalues of $A$. For a general displacement, however, diagonalizing $A$ need not diagonalize $aa^\top$. The explicit eigenvalue expressions for $T_1$ and $T_2$ in the aligned model therefore need not apply.

The required spectral constraints concern only the two sums in~\eqref{eq:architecture-spectrum}. They can be established by trace comparisons, without determining the individual eigenvalues of either intermediate matrix.

\subsection{Displacement normalization and matrix comparison}
Fix an arbitrary feasible candidate satisfying the hypotheses in~\eqref{eq:excluded-center}. Put
\begin{equation}\label{eq:normalized-displacement}
 \rho=\|A^{-1}a\|\ge1,\qquad
 u=\rho^{-1}A^{-1}a,\qquad b=Au=\rho^{-1}a.
\end{equation}
Then $\|u\|=1$ and $bb^\top\preceq aa^\top$. Shortening the displacement in this way gives comparison matrices
\begin{equation}\label{eq:M-iterates}
 M_1=(A+bb^\top/4)^{1/2},\qquad
 M_2=(M_1+bb^\top/16)^{1/2}.
\end{equation}
Square-root monotonicity gives $M_1\preceq T_1$ and then $M_2\preceq T_2$. Therefore the trace bounds for the actual intermediate ellipsoids imply
\begin{equation}\label{eq:M-traces}
 \tr M_1\le n,\qquad\tr M_2\le n.
\end{equation}
The reduction uses matrix order alone and makes no feasibility claim for an ellipsoid obtained by replacing $a$ with $b$. Its purpose is to express the displacement as $b=Au$ with $u$ a unit vector, as required by the next lemma.

\subsection{A directional trace estimate}
For $0<p<1$ and fixed $h>0$, the increment $(z+h)^p-z^p$ decreases with $z>0$. The following matrix estimate retains quantitative dependence on both the baseline matrix and the perturbation direction.

\begin{lemma}[Directional rank-one estimate]\label{lem:directional}
Let $A\succ0$, $\alpha=\lambda_{\min}(A)$, $\|u\|=1$, and $b=Au$. If $M\succ0$ and $m>0$ satisfy
\begin{equation}\label{eq:M-comparison}
 M\preceq\frac m{\alpha^2}A^2,
\end{equation}
then, for $0<p<1$ and $t\ge0$,
\begin{equation}\label{eq:directional}
 \tr\bigl((M+tbb^\top)^p\bigr)-\tr(M^p)
 \ge(m+t\alpha^2)^p-m^p.
\end{equation}
\end{lemma}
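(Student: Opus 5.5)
The plan is to integrate the derivative in $t$. For $X\succ0$, the map $s\mapsto\tr\bigl((X+sbb^\top)^p\bigr)$ is differentiable, with derivative
\[
 \frac{d}{ds}\tr\bigl((X+sbb^\top)^p\bigr)=p\,b^\top(X+sbb^\top)^{p-1}b .
\]
This is the standard formula $\frac{d}{ds}\tr f(X_s)=\tr\bigl(f'(X_s)\dot X_s\bigr)$. The right side of~\eqref{eq:directional} also equals the integral of a derivative:
\[
 (m+t\alpha^2)^p-m^p=\int_0^t p\,\alpha^2(m+s\alpha^2)^{p-1}\,ds .
\]
It therefore suffices to prove, for every $s\ge0$, the pointwise inequality
\[
 b^\top(M+sbb^\top)^{p-1}b\;\ge\;\alpha^2(m+s\alpha^2)^{p-1}.
\]
Put $r=1-p\in(0,1)$, so that $p-1=-r$.

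\textbf{Step 1 (remove $M$).} Set $G=\frac m{\alpha^2}I+s\,uu^\top$ and $Y=\frac m{\alpha^2}A^2+s\,bb^\top$. Since $b=Au$, we have $Y=AGA$. Hypothesis~\eqref{eq:M-comparison} gives $M+sbb^\top\preceq Y$. The map $z\mapsto z^{-r}$ is operator monotone decreasing on $(0,\infty)$, because $z\mapsto z^{r}$ is operator monotone and inversion reverses the Loewner order. Hence $(M+sbb^\top)^{-r}\succeq Y^{-r}$, and it suffices to bound $b^\top Y^{-r}b$ from below.

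\textbf{Step 2 (Hansen's inequality).} This is the crux, and I expect it to be the main obstacle. Naive scalar Jensen arguments in an eigenbasis of $Y$ give bounds in the wrong direction, so I would instead exploit the factorization $Y=AGA$. Let $C=\alpha A^{-1}$. Since $A\succeq\alpha I$, $C$ is a self-adjoint contraction. The function $f(z)=z^r$ is operator monotone on $[0,\infty)$ with $f(0)=0$, so Hansen's inequality $C f(Y)C\preceq f(CYC)$ gives
\[
 CY^rC\preceq(\alpha^2G)^r .
\]
Both sides are positive definite, and inversion reverses the order:
\[
 C^{-1}Y^{-r}C^{-1}\succeq(\alpha^2G)^{-r}.
\]
Now $b=Au=\alpha C^{-1}u$, so
\[
 b^\top Y^{-r}b=\alpha^2\,u^\top C^{-1}Y^{-r}C^{-1}u\ge\alpha^2\,u^\top(\alpha^2G)^{-r}u .
\]
Since $\|u\|=1$, $u$ is an eigenvector of $\alpha^2G$ with eigenvalue $m+s\alpha^2$. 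The last expression therefore equals $\alpha^2(m+s\alpha^2)^{-r}$, which is exactly the required pointwise bound. If Hansen's inequality has not been established among the appendix facts, I would prove the needed special case directly. One route is the integral representation $z^r=c_r\int_0^\infty\frac{z}{z+\lambda}\lambda^{r-1}\,d\lambda$. Another is to reduce to the operator concavity of $z^r$ via the unitary dilation of the contraction $C$.

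\textbf{Step 3 (integrate).} Integrating the pointwise inequality from $0$ to $t$ and multiplying by $p$ yields~\eqref{eq:directional}. The only remaining technicality is justifying the derivative formula, or equivalently the fundamental theorem of calculus, along the path $s\mapsto M+sbb^\top$. This holds because the path stays in the positive definite cone, where $X\mapsto\tr X^p$ is continuously differentiable.
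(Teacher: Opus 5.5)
Your proof is correct, and its outer structure matches the paper's. You differentiate the trace along $s\mapsto M+sbb^\top$, prove the pointwise bound $b^\top(M+sbb^\top)^{p-1}b\ge\alpha^2(m+s\alpha^2)^{p-1}$, and integrate. The routes differ in how that pointwise bound is obtained.

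You keep the rank-one term exactly by comparing with $Y=AGA$. Since $Y$ is not a function of $A$, you then need Hansen's inequality with the contraction $C=\alpha A^{-1}$. The paper instead coarsens once more at the outset. From $\|u\|=1$ it gets $bb^\top=Auu^\top A\preceq A^2$, hence $M+sbb^\top\preceq\frac{m+s\alpha^2}{\alpha^2}A^2$, a scalar multiple of $A^2$. Operator antitonicity of $z^{p-1}$ then gives $(M+sbb^\top)^{p-1}\succeq\bigl(\frac{m+s\alpha^2}{\alpha^2}\bigr)^{p-1}A^{2p-2}$. Evaluating at $b=Au$ reduces everything to $u^\top A^{2p}u\ge\alpha^{2p}$, which yields exactly $\alpha^2(m+s\alpha^2)^{p-1}$.

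This extra coarsening loses nothing in the final bound. Both chains are equalities when $u$ is an $\alpha$-eigenvector of $A$ and $M=(m/\alpha^2)A^2$. So the non-commutativity you identified as the crux never has to be confronted, and only the Loewner-order facts of \cref{app:matrix} are used.

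Your intermediate comparison is sharper than the paper's, since $Y\preceq\frac{m+s\alpha^2}{\alpha^2}A^2$. It does not improve the conclusion, however, and it costs an appeal to Hansen's inequality, which the paper neither states nor proves. The resolvent proof in \cref{app:resolvent} also sidesteps non-commutativity: after the comparison with $A^2$, the bound $b^\top(M+sI)^{-1}b\ge\alpha^2/(m+s)$ involves only functions of $A$.
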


The proof is in \cref{sec:rank-one-proof}, with another proof in \cref{app:resolvent}. Its mechanism is the operator antitonicity of the derivatives of fractional power functions, which also underlies the trace-increment inequalities in \citet{BouhtouGaubertSagnol2010,Sagnol2013}. The additional comparison with $A^2$ gives the explicit scalar lower bound needed here. The lemma applies without requiring $A$, $M$, and $bb^\top$ to commute.

Three applications suffice. The first bounds the trace of $M_1$; the other two combine through cancellation of intermediate terms to bound the trace of $M_2$.

\begin{proposition}[Trace bounds for arbitrary displacements]\label{prop:spectral}
For the comparison matrices in~\eqref{eq:M-iterates} and $\alpha=\lambda_{\min}(A)$,
\begin{align}
 \tr M_1&\ge\tr(A^{1/2})+f_1(\alpha)-\alpha^{1/2},\label{eq:trace-first}\\
 \tr M_2&\ge\tr(A^{1/4})+f_2(\alpha)-\alpha^{1/4}.\label{eq:trace-second}
\end{align}
\end{proposition}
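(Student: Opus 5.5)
The plan is to obtain both inequalities directly from \cref{lem:directional}: one application for \eqref{eq:trace-first}, and two applications for \eqref{eq:trace-second} whose intermediate terms telescope. Throughout, write $\lambda$ for a generic eigenvalue of $A$, so $\lambda\ge\alpha$. Recall that $b=Au$ with $\|u\|=1$, which is exactly the form \cref{lem:directional} requires.

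\emph{First bound.} Apply \cref{lem:directional} with $M=A$, $p=1/2$, $t=1/4$ and $m=\alpha$. The hypothesis \eqref{eq:M-comparison} reads $A\preceq A^2/\alpha$. Since $A$ commutes with $A^2$, this reduces to the scalar inequality $\lambda\le\lambda^2/\alpha$, which holds because $\lambda\ge\alpha$. The conclusion is
\[
\tr M_1-\tr(A^{1/2})\ge(\alpha+\alpha^2/4)^{1/2}-\alpha^{1/2}=f_1(\alpha)-\alpha^{1/2},
\]
which is \eqref{eq:trace-first}.

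\emph{Second bound.} Split the increment as
\[
\tr M_2-\tr(A^{1/4})=\bigl[\tr M_2-\tr(M_1^{1/2})\bigr]+\bigl[\tr\bigl((A+bb^\top/4)^{1/4}\bigr)-\tr(A^{1/4})\bigr],
\]
using $M_1^{1/2}=(A+bb^\top/4)^{1/4}$.

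For the second bracket, use \cref{lem:directional} again with $M=A$, $m=\alpha$, $t=1/4$, now with $p=1/4$. It gives the lower bound $(\alpha+\alpha^2/4)^{1/4}-\alpha^{1/4}=f_1(\alpha)^{1/2}-\alpha^{1/4}$.

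For the first bracket, apply the lemma to $M=M_1$ with $p=1/2$, $t=1/16$ and $m=f_1(\alpha)$. This gives the lower bound $(f_1(\alpha)+\alpha^2/16)^{1/2}-f_1(\alpha)^{1/2}=f_2(\alpha)-f_1(\alpha)^{1/2}$. Adding the two brackets, the terms $\pm f_1(\alpha)^{1/2}$ cancel, and \eqref{eq:trace-second} follows.

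\emph{Main obstacle.} The main step is verifying the comparison hypothesis $M_1\preceq \frac{f_1(\alpha)}{\alpha^2}A^2$ for the third application. Here $M_1$ need not commute with $A$. I would proceed in three steps.

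\begin{enumerate}
\item Since $uu^\top\preceq I$, we have $bb^\top=Auu^\top A\preceq A^2$. Hence $A+bb^\top/4\preceq A+A^2/4$.
\item Operator monotonicity of the square root then gives $M_1\preceq (A+A^2/4)^{1/2}$. The right-hand side is a function of $A$ alone.
\item It therefore suffices to check the scalar inequality $(\lambda+\lambda^2/4)^{1/2}\le f_1(\alpha)\lambda^2/\alpha^2$ for all $\lambda\ge\alpha$. Equivalently, set $g(\lambda)=(\lambda+\lambda^2/4)^{1/2}/\lambda^2=(\lambda^{-3}+\lambda^{-2}/4)^{1/2}$. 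The required inequality is $g(\lambda)\le g(\alpha)$, which holds because $g$ is decreasing.
\end{enumerate}

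With this comparison in place, the three applications of \cref{lem:directional} combine as described above.
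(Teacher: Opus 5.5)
Your proposal is correct and matches the paper's proof: the same three applications of \cref{lem:directional} with $(M,m,p,t)=(A,\alpha,1/2,1/4)$, $(A,\alpha,1/4,1/4)$ and $(M_1,f_1(\alpha),1/2,1/16)$, and the same cancellation of $f_1(\alpha)^{1/2}$. The only difference is cosmetic, in checking $M_1\preceq\frac{f_1(\alpha)}{\alpha^2}A^2$: you bound $M_1\preceq(A+A^2/4)^{1/2}$ and then use a scalar monotonicity argument, while the paper chains $A+bb^\top/4\preceq(\frac1\alpha+\frac14)A^2$ to get $M_1\preceq\frac{f_1(\alpha)}{\alpha}A\preceq\frac{f_1(\alpha)}{\alpha^2}A^2$; both are valid.
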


\begin{proof}
\emph{First iterate.} Since $A\succeq\alpha I$, spectral calculus gives $A\preceq A^2/\alpha$. In \cref{lem:directional}, take
\[
 (M,m,p,t)=(A,\alpha,1/2,1/4).
\]
The left-hand side is $\tr M_1-\tr(A^{1/2})$, and the scalar term on the right is $f_1(\alpha)-\alpha^{1/2}$. This is the first trace bound~\eqref{eq:trace-first}.

\emph{Second iterate.} To apply the lemma with baseline $M_1$, observe that $bb^\top\preceq A^2$ implies
\begin{equation}\label{eq:M1-upper}
 M_1\preceq\left(\frac1\alpha A^2+\frac14A^2\right)^{1/2}
 =\frac{f_1(\alpha)}\alpha A
 \preceq\frac{f_1(\alpha)}{\alpha^2}A^2.
\end{equation}
We can therefore apply \cref{lem:directional} both with $(A,\alpha,1/4,1/4)$ and with $(M_1,f_1(\alpha),1/2,1/16)$. The two conclusions are
\begin{align*}
 \tr(M_1^{1/2})-\tr(A^{1/4})
 &\ge f_1(\alpha)^{1/2}-\alpha^{1/4},\\
 \tr M_2-\tr(M_1^{1/2})
 &\ge f_2(\alpha)-f_1(\alpha)^{1/2}.
\end{align*}
Adding them cancels $\tr(M_1^{1/2})$ on the left and $f_1(\alpha)^{1/2}$ on the right. The remaining inequality is precisely the second trace bound~\eqref{eq:trace-second}.
\end{proof}

The cancellation relates the two intermediate constructions. A fourth-root comparison with the original matrix and a square-root comparison with the first iterate yield exactly the scalar expression obtained in the diagonal model.

\subsection{Completion of the upper bound}
Combine \cref{prop:spectral} with the upper trace bounds~\eqref{eq:M-traces}. If $\lambda_1=\alpha,\lambda_2,\ldots,\lambda_n$ are the eigenvalues of $A$, the result is
\[
 f_j(\alpha)+\sum_{i=2}^n\lambda_i^{1/2^j}\le n,
 \qquad j=1,2.
\]
These are precisely the constraints in~\eqref{eq:architecture-spectrum}, established for arbitrary displacements. The determinant calculation~\eqref{eq:logdet-bound} thus applies unchanged and gives
\[
 \det A\le\min\{R_1(\alpha),R_2(\alpha)\}
 \le\frac{\sqrt e}{2}
\]
by \cref{lem:scalar}. The convention of an empty sum covers $n=1$. The argument also permits a repeated smallest eigenvalue, since only one occurrence of $\alpha$ is separated from the sum.

This proves the upper bound in \cref{thm:main} through the center-exclusion formulation. The diagonal model identifies the sufficient spectral constraints, and the trace comparison establishes them in general. The next section proves optimality of the constant.

\section{Sharpness via circular cones}\label{sec:sharpness}

The sharpness construction combines a factor $1/2$ in the semiaxis parallel to the cutting normal with a transverse volume factor tending to $\sqrt e$. Both features are realized by an explicit family of circular cones.

For $n\ge2$, write $x=(t,y)\in\R\times\R^{n-1}$ and set
\begin{equation}\label{eq:cone}
 K_n=\{(t,y):t\ge-1,\quad t+\sqrt{n^2-1}\,\|y\|\le n\}.
\end{equation}
Its vertex is $(n,0)$. Cut at $t=0$ and retain $K_n^-=K_n\cap\{t\le0\}$. Consider the ellipsoid
\begin{equation}\label{eq:cone-ellipsoid}
 E_n=-\frac12e_1+
 \diag\!\left(\frac12,\sqrt{\frac n{n-1}},\ldots,
 \sqrt{\frac n{n-1}}\right)\B.
\end{equation}
The left panel of \cref{fig:geometry} is this example for $n=2$.

\begin{lemma}[Normalization and feasibility for the cone family]\label{lem:cone-feasibility}
For the body and ellipsoid just defined, $J(K_n)=\B$ and $E_n\subset K_n^-$.
\end{lemma}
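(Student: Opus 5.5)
The lemma has two parts. I would verify $J(K_n)=\B$ through John's contact-point characterization, and verify $E_n\subset K_n^-$ by checking support functions against a halfspace description of $K_n^-$. Throughout, put $s=\sqrt{n^2-1}$, so that $1+s^2=n^2$.

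\emph{Halfspace description.} Since $\|y\|=\max_{\|\theta\|=1}\langle\theta,y\rangle$, the body $K_n$ is the intersection of the halfspaces $\{-t\le1\}$ and
\[
 H_\theta=\{t+s\langle\theta,y\rangle\le n\},\qquad \theta\in S^{n-2}.
\]
Adding $\{t\le0\}$ gives $K_n^-$. An ellipsoid $c+D\B$ lies in $\{\langle v,x\rangle\le h\}$ if and only if $\langle v,c\rangle+\|Dv\|\le h$.

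\emph{Containment $E_n\subset K_n^-$.} Here $c=-\tfrac12e_1$ and $D=\diag\bigl(\tfrac12,\sqrt{n/(n-1)},\ldots,\sqrt{n/(n-1)}\bigr)$.
\begin{itemize}
\item For $v=e_1$ and $h=0$: $-\tfrac12+\tfrac12=0$.
\item For $v=-e_1$ and $h=1$: $\tfrac12+\tfrac12=1$.
\item For $v=(1,s\theta)$ and $h=n$: the left side is
\[
 -\tfrac12+\sqrt{\tfrac14+s^2\tfrac{n}{n-1}}=-\tfrac12+\sqrt{\tfrac14+n(n+1)}=-\tfrac12+\bigl(n+\tfrac12\bigr)=n.
\]
\end{itemize}
All constraints hold, each with equality, so $E_n\subset K_n^-$.

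\emph{Inclusion $\B\subset K_n$ and contact points.} The distance from $0$ to the boundary of $H_\theta$ is $n/\|(1,s\theta)\|=n/n=1$. The distance from $0$ to the base $\{t=-1\}$ is also $1$. Hence $\B\subset K_n$, and the contact points are
\[
 -e_1,\qquad u_\theta=\tfrac1n(1,s\theta),\quad \theta\in S^{n-2}.
\]

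\emph{Maximality via John's condition.} I would invoke the sufficiency direction of John's theorem, presumably among the standard facts in \cref{app:ellipsoids}: if $\B\subset K$ and there are contact points $u_i\in\partial K\cap\partial\B$ with weights $c_i>0$ such that $\sum c_iu_i=0$ and $\sum c_iu_iu_i^\top=I$, then $J(K)=\B$.

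To build the weights, put mass $c_0$ at $-e_1$ and spread total mass $c$ uniformly over the $u_\theta$, using the normalized uniform measure $\sigma$ on $S^{n-2}$. The relevant moments are $\int\theta\,d\sigma=0$ and $\int\theta\theta^\top\,d\sigma=I_{n-1}/(n-1)$.
\begin{itemize}
\item Centering requires $-c_0+c/n=0$, so $c_0=c/n$.
\item The $e_1e_1^\top$ entry of the second moment is $c_0+c/n^2=c(n+1)/n^2$.
\item The transverse block is $c\,\frac{s^2}{n^2(n-1)}I_{n-1}=c(n+1)/n^2\,I_{n-1}$.
\item The off-diagonal entries vanish because $\int\theta\,d\sigma=0$.
\end{itemize}
Choosing $c=n^2/(n+1)$ makes the second moment equal to $I$.

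By Carathéodory, a continuous measure can be replaced by finitely many points. For instance, take the $2(n-1)$ points $\theta=\pm e_k$ with equal weights, which reproduce the first and second moments of $\sigma$ exactly. This also handles $n=2$, where $S^0=\{\pm1\}$.

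\emph{Main obstacle.} The main risk is availability: whether the appendix supplies the sufficiency direction of John's theorem, in the form that a decomposition of identity by contact points certifies maximality. If it does not, I would prove it directly. Suppose $a+A\B\subset K_n$. Then each contact point gives $\langle u_i,a\rangle+\|Au_i\|\le1$. Multiply by $c_i$ and sum; the centering condition removes the $a$ term. Since $\|Au_i\|\ge\langle Au_i,u_i\rangle$, this yields $\tr A\le n$ (noting $\sum c_i=\tr I=n$). By AM–GM, $\det A\le1$, with equality only when $A=I$; equality then forces $a=0$ by uniqueness of the maximizer.
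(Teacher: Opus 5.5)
Your proof is correct and follows the paper's argument essentially step for step. You use the same contact points $-e_1$ and $\tfrac1n(1,s\theta)$, the same weights $n/(n+1)$ and $n^2/(2(n^2-1))$ on the signed coordinate directions, the same sufficient contact certificate (the paper's \cref{lem:contact-certificate}, proved by exactly the trace and AM--GM argument you sketch), and the same support-function checks for $E_n$. The differences are cosmetic: you get $\B\subset K_n$ from hyperplane distances rather than one Cauchy--Schwarz estimate, and you reach the weights through a spherical average, which your explicit $\pm e_k$ choice already makes unnecessary (no appeal to Carath\'eodory is needed).
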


The proof in \cref{sec:cone-proof} gives explicit contact weights for the first assertion and checks the supporting halfspaces for the second. No maximality assertion about $E_n$ inside the half-cone is needed.

The semiaxes give the explicit volume-ratio lower bound
\begin{equation}\label{eq:cone-ratio}
 \frac{w(K_n^-)}{w(K_n)}
 \ge\frac{\vol(E_n)}{\vol(\B)}
 =\frac12\left(\frac n{n-1}\right)^{(n-1)/2}
 \longrightarrow\frac{\sqrt e}{2}.
\end{equation}
Indeed, with $m=n-1$, the logarithm of the transverse product is
\[
 \frac m2\log\left(1+\frac1m\right)\longrightarrow\frac12.
\]
Each transverse semiaxis tends to one, whereas their product tends to $\sqrt e$. This accumulated transverse contribution compensates for part of the contraction in the cutting direction.

\begin{proof}[Completion of the proof of \cref{thm:main}]
\Cref{sec:upper}, using the lemmas proved in \cref{sec:estimates}, establishes the upper bound for every convex body and central cut. For any smaller constant $r<\sqrt e/2$, the lower bound in~\eqref{eq:cone-ratio} exceeds $r$ in sufficiently large dimension. Hence no smaller constant is valid simultaneously in all dimensions.
\end{proof}

The cone example and its limiting value already appear in \citet{TarasovKhachiyanErlikh1988,Khachiyan1990}. The role of the construction here is to match the upper bound with an explicit normalized witness. Uniform sharpness is a statement over all dimensions; it does not imply equality in any fixed dimension. In dimension one, a body is an interval and the central cut halves its length, so the optimal ratio is $1/2$.

\section{Consequences of the spectral bounds}\label{sec:consequences}

The spectral constraints also yield a finite-dimensional estimate, strictness of the universal bound in finite dimension, and a necessary condition for near equality. We establish these consequences before presenting the supporting proofs. Throughout this section, $\E(a,A)$ satisfies the normalized hypotheses in~\eqref{eq:excluded-center} and $\alpha=\lambda_{\min}(A)$.

\subsection{Dimension-dependent estimates}
The estimate based on $\log z\le z-1$ eliminates the dimension dependence. Applying the arithmetic--geometric mean inequality directly to the same spectral constraints instead yields an explicit dependence on $n$. For $n\ge2$,
\begin{equation}\label{eq:finite-dimensional}
 \det A\le\alpha\left(\frac{n-f_j(\alpha)}{n-1}\right)^{2^j(n-1)},
 \qquad j=1,2.
\end{equation}
The bases are positive for a feasible nonsingular ellipsoid. These are useful bounds for a prescribed dimension, but we do not claim that optimizing them determines that dimension's best possible constant.

\subsection{Nonattainment and near equality}
For these conclusions it is useful to record the quantitative certificate proved in \cref{sec:scalar-proof}. Define
\[
 x=\log\alpha,\qquad x_0=-\log2,\qquad C=\frac12-\log2,
 \qquad F_j(x)=\log R_j(e^x).
\]
The global tangent bounds are
\begin{equation}\label{eq:tangents}
 F_1(x)\le C+\frac{x-x_0}{6},\qquad
 F_2(x)\le C-\frac{x-x_0}{42}.
\end{equation}
For $x<x_0$, the first upper bound is strictly below $C$; for $x>x_0$, the second is strictly below $C$. These inequalities establish nonattainment away from $\alpha=1/2$ and quantitatively constrain the smallest semiaxis of a near-extremizer.

\begin{corollary}\label{cor:nonattainment}
In finite dimension, every nonsingular ellipsoid satisfying the assumptions in~\eqref{eq:excluded-center} has $\det A<\sqrt e/2$.
\end{corollary}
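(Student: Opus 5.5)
We will show that, in a fixed dimension $n$, the chain of inequalities leading to $\det A\le\min\{R_1(\alpha),R_2(\alpha)\}\le\sqrt e/2$ cannot be tight everywhere at once. The argument splits according to the value of $\alpha=\lambda_{\min}(A)$, using the tangent bounds~\eqref{eq:tangents}.

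\emph{The case $\alpha\ne1/2$.} By \cref{sec:upper} we have $\log\det A\le\min\{F_1(x),F_2(x)\}$ with $x=\log\alpha$. If $x<x_0$, the first tangent bound gives $F_1(x)\le C+(x-x_0)/6<C$. If $x>x_0$, the second gives $F_2(x)<C$. In either case $\det A<e^C=\sqrt e/2$, and this holds in every dimension.

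\emph{The case $\alpha=1/2$.} Here the scalar bound is attained, so strictness must come from the determinant step~\eqref{eq:logdet-bound}. That step used $\log z\le z-1$ for $z=\lambda_i^{1/2^j}$, $i=2,\dots,n$, which is an equality only when every $\lambda_i=1$.
\begin{itemize}
\item If some $\lambda_i\ne1$ for $i\ge2$, the inequality is strict, so $\log\det A<\log\alpha+2^j(1-f_j(\alpha))$. This gives $\det A<R_1(1/2)=\sqrt e/2$.
\item If all $\lambda_i=1$, then $\det A=\alpha=1/2<\sqrt e/2$ directly.
\end{itemize}
The case $n=1$ is covered as well: the sum is empty and $\det A=\alpha\le1/2$.

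A cleaner way to organize the same argument uses the finite-dimensional bound~\eqref{eq:finite-dimensional} with $j=1$. At $\alpha=1/2$ it gives
\[
 \det A\le\tfrac12\Bigl(\tfrac{n-3/4}{n-1}\Bigr)^{2(n-1)}=\tfrac12\Bigl(1+\tfrac{1}{4(n-1)}\Bigr)^{2(n-1)}<\tfrac12 e^{1/2},
\]
by the strict inequality $(1+y)^k<e^{ky}$ for $y>0$.

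\emph{The only delicate point.} The whole proof rests on the two global tangent inequalities~\eqref{eq:tangents}, which are proved in \cref{sec:scalar-proof} and which we take as given. Granting them, the only care needed is to record that strictness comes from the tangent bounds when $\alpha\ne1/2$, and from the finite dimension through the strict AM--GM / $\log z<z-1$ step when $\alpha=1/2$.
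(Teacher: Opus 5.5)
Your proposal is correct and follows the paper's proof: the tangent bounds~\eqref{eq:tangents} give strictness when $\alpha\ne1/2$, the $j=1$ case of~\eqref{eq:finite-dimensional} gives it at $\alpha=1/2$ for $n\ge2$, and $n=1$ is handled separately. Your first variant at $\alpha=1/2$ is also valid and uses the same strictness mechanism. It splits on whether some $\lambda_i\ne1$, using the strict case of $\log z\le z-1$ when one does and $\det A=1/2$ when none does.
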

\begin{proof}
If $\alpha\ne1/2$, the relevant tangent bound in~\eqref{eq:tangents} is strictly below $C$. If $\alpha=1/2$ and $n\ge2$, use $j=1$ in the finite-dimensional bound~\eqref{eq:finite-dimensional}:
\[
 \det A\le\frac12\left(1+\frac1{4(n-1)}\right)^{2(n-1)}
 <\frac12e^{1/2}.
\]
For $n=1$ and $\alpha=1/2$, the determinant is $1/2$.
\end{proof}

\begin{corollary}\label{cor:near-equality}
Under the same assumptions,
\begin{equation}\label{eq:spectral-gap}
 \log\frac{\rstar}{\det A}\ge
 \begin{cases}
  \dfrac16\log\dfrac1{2\alpha},&0<\alpha\le\tfrac12,\\[5pt]
  \dfrac1{42}\log(2\alpha),&\alpha\ge\tfrac12.
 \end{cases}
\end{equation}
In particular, if $\det A\ge\rstar e^{-\varepsilon}$ for $\varepsilon\ge0$, then
\begin{equation}\label{eq:near-equality}
 \frac12e^{-6\varepsilon}\le\alpha\le\frac12e^{42\varepsilon}.
\end{equation}
\end{corollary}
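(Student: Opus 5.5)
The plan is to combine the determinant bound~\eqref{eq:R-definition}, which \cref{sec:upper} establishes for arbitrary displacements, with the tangent bounds~\eqref{eq:tangents}. The rest is bookkeeping in the variable $x=\log\alpha$.

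First, for a candidate satisfying~\eqref{eq:excluded-center}, \cref{sec:upper} gives $\det A\le\min\{R_1(\alpha),R_2(\alpha)\}$. Taking logarithms with $x=\log\alpha$, this reads $\log\det A\le\min\{F_1(x),F_2(x)\}$. Note that $C=\tfrac12-\log 2=\log\rstar$.

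Second, split into the two ranges of $\alpha$.
\begin{itemize}
 \item For $0<\alpha\le\tfrac12$, we have $x\le x_0$. Use only $F_1$: by~\eqref{eq:tangents},
 \[
  \log\det A\le F_1(x)\le C+\frac{x-x_0}{6}=C-\frac16\log\frac1{2\alpha},
 \]
 since $x_0-x=\log\frac{1}{2\alpha}$. Rearranging gives the first case of~\eqref{eq:spectral-gap}.
 \item For $\alpha\ge\tfrac12$, use $F_2$ in the same way. This gives $\log\det A\le C-\frac{1}{42}\log(2\alpha)$, which is the second case.
\end{itemize}
At $\alpha=\tfrac12$ both right-hand sides vanish, so the cases agree on their overlap.

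Third, derive~\eqref{eq:near-equality}. If $\det A\ge\rstar e^{-\varepsilon}$, then $\log(\rstar/\det A)\le\varepsilon$.
\begin{itemize}
 \item When $\alpha\le\tfrac12$, the first case gives $\tfrac16\log\frac1{2\alpha}\le\varepsilon$, so $\alpha\ge\tfrac12e^{-6\varepsilon}$. The upper bound $\alpha\le\tfrac12\le\tfrac12e^{42\varepsilon}$ holds trivially.
 \item When $\alpha\ge\tfrac12$, the second case gives $\log(2\alpha)\le42\varepsilon$, so $\alpha\le\tfrac12e^{42\varepsilon}$. The lower bound holds trivially.
\end{itemize}
Together these give the two-sided estimate.

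The only substantive ingredient is the tangent bounds~\eqref{eq:tangents}, which hold globally in $x$. The paper asserts them here and proves them in \cref{sec:scalar-proof}. Granting them, I expect no obstacle beyond confirming that the sign conventions for $x-x_0$ match the two ranges.

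If I had to justify~\eqref{eq:tangents} myself, I would proceed as follows:
\begin{enumerate}
 \item Verify that $F_j(x)=x+2^j(1-f_j(e^x))$ is concave in $x$. For this I would compute $F_j''$ and show that $f_j(e^x)$ is convex in $x$.
 \item Compute the derivatives at $x_0$: $F_1'(x_0)=1/6$ and $F_2'(x_0)=-1/42$. Using $f_1(1/2)=3/4$ and $f_2(1/2)=7/8$, I expect this to be a routine check.
 \item Conclude that concavity makes the tangent line at $x_0$ a global upper bound.
\end{enumerate}
Step 1, establishing concavity, would be the hardest part of this.
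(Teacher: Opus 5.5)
Your proposal is correct and matches the paper's proof: bound $\log\det A\le F_j(\log\alpha)$, use the tangent bound~\eqref{eq:tangents} for $F_1$ when $\alpha\le\tfrac12$ and for $F_2$ when $\alpha\ge\tfrac12$, subtract from $C=\log\rstar$, and solve for $\alpha$; the opposite-side bound in~\eqref{eq:near-equality} holds trivially because $\varepsilon\ge0$. For the concavity step you flag as hardest, the paper avoids computing $F_j''$: $\log f_j(e^x)$ is a composition of the convex, coordinatewise nondecreasing map $(s,t)\mapsto\log(e^s+e^t)$, so $f_j(e^x)$ is log-convex and hence convex.
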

\begin{proof}
Combine $\log\det A\le F_j(x)$ with the first tangent bound when $x\le x_0$ and the second when $x\ge x_0$. Subtracting from $C$ gives inequality~\eqref{eq:spectral-gap}; solving for $\alpha$ gives the bounds~\eqref{eq:near-equality}.
\end{proof}

This necessary condition controls only the smallest semiaxis. A geometric stability theorem would additionally require control of the remaining semiaxes, the center displacement, and the surrounding body.

\subsection{Repeated exact central cuts}
For an idealized sequence
\[
 K_{k+1}=K_k\cap H_k,\qquad c(K_k)\in\partial H_k,
\]
successive applications of the main theorem give
\begin{equation}\label{eq:iteration}
 w(K_N)\le\rstar^Nw(K_0),\qquad
 \log\frac{w(K_0)}{w(K_N)}\ge N\left(\log2-\frac12\right).
\end{equation}
This is the optimal universal one-step factor for the exact-center volume measure. An algorithmic complexity claim would also have to account for finding the centers, numerical accuracy, and separation costs. The geometric inequality alone does not provide those additional estimates.

\section{Proofs of the supporting results}\label{sec:estimates}

This section proves the results used in the preceding argument: the geometric reduction, the directional rank-one estimate, the joint scalar bound, and the cone certificate. The proofs are independent of the main theorem, so the deferred presentation introduces no circular dependence.

\subsection{Center exclusion and first-order maximality}\label{sec:basic-proofs}

\begin{proof}[Proof of \cref{prop:exclusion}]
The equation $0=a+Ay$ has the unique solution $y=-A^{-1}a$. Thus $0\notin\interior\E(a,A)$ is equivalent to $\|A^{-1}a\|\ge1$. Any ellipsoid in a halfspace through $0$ has this property, so the implication~\eqref{eq:excluded-center} suffices for the central-cut theorem.

Conversely, suppose $\rho=\|A^{-1}a\|\ge1$ and set $v=A^{-2}a\ne0$. The minimum of the linear functional $v^\top x$ on the ellipsoid is
\[
 \min_{x\in\E(a,A)}v^\top x
 =v^\top a-\|Av\|=\rho^2-\rho\ge0.
\]
Hence the ellipsoid lies in the closed halfspace $v^\top x\ge0$. A central-cut upper bound applied to that halfspace implies~\eqref{eq:excluded-center}, proving equivalence.
\end{proof}

\begin{proof}[Proof of \cref{lem:trace}]
Convexity provides a feasible path from the unit ball toward $\E(b,S)$:
\[
 tb+((1-t)I+tS)\B\subset K,\qquad0\le t\le1.
\]
Indeed, the point associated with $y\in\B$ is the convex combination $(1-t)y+t(b+Sy)$. Its shape matrix stays positive definite. Since the unit ball has maximum volume,
\[
 \log\det((1-t)I+tS)\le0.
\]
The right derivative at $t=0$ is $\tr(S-I)$ and cannot be positive. Therefore $\tr S\le n$.
\end{proof}

\subsection{Containment of the intermediate ellipsoid}\label{sec:intermediate-proof}
Containment can be checked one supporting direction at a time. For an ellipsoid, the support function is
\begin{equation}\label{eq:support}
 h_{\E(b,S)}(v)=v^\top b+\|Sv\|,
\end{equation}
obtained by maximizing $(Sv)^\top y$ over $y\in\B$. This identity provides an algebraic criterion for the required containment.

\begin{proof}[Proof of \cref{lem:intermediate}]
Fix $v\ne0$ and put $h=h_K(v)$ and $s=v^\top b$. The assumed containments give
\[
 \|v\|\le h,\qquad \|Sv\|\le h-s.
\]
In particular $h>0$ and $s\le h$. Cauchy--Schwarz therefore implies
\[
 v^\top Sv\le\|v\|\,\|Sv\|\le h(h-s).
\]
Adding the rank-one term completes a square:
\[
 v^\top(S+bb^\top/4)v
 \le h^2-hs+s^2/4=(h-s/2)^2.
\]
Since $h-s/2>0$, the support function of the proposed ellipsoid is at most
\[
 \frac s2+\sqrt{v^\top(S+bb^\top/4)v}\le h.
\]
It satisfies every support-function bound of $K$, which proves the containment in~\eqref{eq:intermediate}.
\end{proof}

\subsection{The directional rank-one estimate}\label{sec:rank-one-proof}
The matrix facts needed here are proved in \cref{app:matrix}: inverse and negative fractional powers reverse the positive definite order, and differentiation after taking the trace gives $D[\tr(X^p)](H)=p\tr(X^{p-1}H)$. These facts hold without commuting matrices.

\begin{proof}[Proof of \cref{lem:directional}]
Since $\|u\|=1$, we have $uu^\top\preceq I$ and therefore $bb^\top\preceq A^2$. Combining this with the assumed baseline comparison gives, for $s\ge0$,
\begin{equation}\label{eq:path-comparison}
 M+sbb^\top\preceq\frac{m+s\alpha^2}{\alpha^2}A^2.
\end{equation}
The exponent $p-1$ belongs to $(-1,0)$. Its operator antitonicity reverses the comparison:
\[
 (M+sbb^\top)^{p-1}
 \succeq\left(\frac{m+s\alpha^2}{\alpha^2}\right)^{p-1}A^{2p-2}.
\]
Evaluate both sides on $b=Au$. Since $u^\top A^{2p}u\ge\alpha^{2p}$,
\begin{equation}\label{eq:directional-derivative-bound}
 b^\top(M+sbb^\top)^{p-1}b
 \ge\alpha^2(m+s\alpha^2)^{p-1}.
\end{equation}
On the other hand, the trace derivative is
\[
 \frac{d}{ds}\tr\bigl((M+sbb^\top)^p\bigr)
 =p\,b^\top(M+sbb^\top)^{p-1}b.
\]
Multiply inequality~\eqref{eq:directional-derivative-bound} by $p$ and integrate from $0$ to $t$. The resulting scalar integral is $(m+t\alpha^2)^p-m^p$, proving inequality~\eqref{eq:directional}.
\end{proof}

The matrix $A$ both bounds the baseline from above and parametrizes the displacement through $b=Au$. The eigenvalue estimate concerns only a power of $A$; simultaneous diagonalization of $M$ and $bb^\top$ is not required. Likewise, the derivative formula is for the trace; it does not assert that the full matrix derivative of $X^p$ is multiplication by $pX^{p-1}$.

\begin{remark}[Exactness in the shortest-axis case]\label{rem:equality-model}
If $u$ is a unit eigenvector of $A$ for $\alpha$ and $M=(m/\alpha^2)A^2$, the perturbation changes only the eigenvalue $m$ in that direction. Equality then holds in~\eqref{eq:directional}. This is the equality case underlying the scalar functions $f_1$ and $f_2$.
\end{remark}

\subsection{The joint scalar bound}\label{sec:scalar-proof}

\begin{proof}[Proof of \cref{lem:scalar} and the tangent bounds~\eqref{eq:tangents}]
Put
\[
 F_j(x)=\log R_j(e^x)=x+2^j(1-f_j(e^x)).
\]
The required concavity holds in this logarithmic coordinate. To verify it, write
\begin{align*}
 \log f_1(e^x)&=\frac12\log(e^x+e^{2x}/4),\\
 \log f_2(e^x)&=\frac12\log\bigl(e^{\log f_1(e^x)}+e^{2x}/16\bigr).
\end{align*}
The function $(s,t)\mapsto\log(e^s+e^t)$ is convex and nondecreasing in both variables. The first displayed expression is therefore convex, and composition gives convexity of the second. Hence $f_j(e^x)$ are positive log-convex functions, in particular convex functions. It follows that $F_1,F_2$ are concave.

At $x_0=-\log2$, direct calculation gives
\begin{gather}
 f_1(1/2)=\frac34,\qquad f_2(1/2)=\frac78,\label{eq:crossing-values}\\
 f_1'(1/2)=\frac56,\qquad f_2'(1/2)=\frac{43}{84}.\label{eq:crossing-derivatives}
\end{gather}
With $C=1/2-\log2$, this means
\begin{equation}\label{eq:F-data}
 F_1(x_0)=F_2(x_0)=C,\qquad
 F_1'(x_0)=\frac16,\quad F_2'(x_0)=-\frac1{42}.
\end{equation}
Each concave function lies below its tangent at $x_0$, proving both bounds in~\eqref{eq:tangents}. If $x\le x_0$, the first tangent is at most $C$; if $x\ge x_0$, the second is at most $C$. Thus
\[
 \min\{F_1(x),F_2(x)\}\le C\qquad(x\in\R).
\]
At $x=x_0$ both functions equal $C$. Exponentiating proves the exact maximum in~\eqref{eq:scalar-sharp}.
\end{proof}

The two parameter ranges in \cref{fig:bounds} are therefore controlled by different constraints. The tangent inequalities determine and certify the joint maximum analytically.

\begin{remark}[An equivalent weighted certificate]\label{rem:weighted}
An equivalent certificate combines the two tangent inequalities. The weights $1/8$ and $7/8$ cancel the slopes in~\eqref{eq:F-data}, so the concave function
\[
 G(x)=\frac18F_1(x)+\frac78F_2(x)
\]
has derivative zero at $x_0$ and global maximum $C$. Since $\min(F_1,F_2)\le G$, this also proves the bound. The weights follow from $\theta/6-(1-\theta)/42=0$, rather than numerical fitting.
\end{remark}

For comparison with the 1990 estimate, substituting $\alpha=d^{-2}$ into $R_1$ gives
\[
 d^{-2}\exp\!\left(2-\frac{2\sqrt{1+1/(4d^2)}}d\right).
\]
This is Khachiyan's final expression. Its maximizer satisfies $4d^6-3d^4-4d^2-1=0$. The additional bound $R_2$ is restrictive near that maximizer, while $R_1$ is restrictive near the maximizer of $R_2$. Their complementarity is the improvement used here.

\subsection{The cone certificate}\label{sec:cone-proof}

\begin{proof}[Proof of \cref{lem:cone-feasibility}]
Every point $(t,y)\in\B$ satisfies $t\ge-1$, and Cauchy--Schwarz gives
\[
 t+\sqrt{n^2-1}\,\|y\|\le n\sqrt{t^2+\|y\|^2}\le n
\]
on the ball. Hence $\B\subset K_n$. The base contact is $u_0=-e_1$, and the side contacts are
\[
 u_v=\left(\frac1n,\sqrt{1-\frac1{n^2}}\,v\right),\qquad\|v\|=1.
\]
Choose $v$ among the signed coordinate vectors of $\R^{n-1}$. Give $u_0$ weight $c_0=n/(n+1)$ and each of the $2(n-1)$ chosen side contacts weight
\[
 c_v=\frac{n^2}{2(n^2-1)}.
\]
The total side weight is $n^2/(n+1)$, so the first-coordinate mean is $-n/(n+1)+n/(n+1)=0$. Transverse means and mixed second moments vanish in signed pairs. The first-coordinate second moment is $n/(n+1)+1/(n+1)=1$, and each transverse second moment is $2c_v(1-1/n^2)=1$. Thus
\begin{equation}\label{eq:cone-certificate}
 \sum_i c_i u_i=0,\qquad\sum_i c_i u_i u_i^\top=I.
\end{equation}
These are supporting directions for $K_n$. The sufficient contact certificate in \cref{lem:contact-certificate} therefore gives $J(K_n)=\B$.

For the second assertion, the first coordinate of $E_n$ lies in $[-1,0]$. At every side normal $u_v$, its support function is
\[
 -\frac1{2n}+\sqrt{\frac1{4n^2}+\frac n{n-1}\left(1-\frac1{n^2}\right)}
 =-\frac1{2n}+1+\frac1{2n}=1.
\]
It therefore satisfies the base, cut, and all side inequalities, proving $E_n\subset K_n^-$.
\end{proof}

This completes the proofs of the supporting results used in \cref{sec:architecture,sec:upper,sec:sharpness}. The standard matrix facts and the contact-certificate argument are supplied in the appendices.

\section{Related work and further directions}\label{sec:history}

The diagonal model identifies the spectral constraints required for the determinant estimate, and the trace comparison extends them to arbitrary displacements. The corresponding geometric construction, center-computation methods, and matrix inequalities belong to distinct strands of the literature.

\subsection{Geometric foundations}
Extremal ellipsoids were studied well before their use in cutting-plane methods. Behrend treated the smallest circumscribed and largest inscribed ellipses in the plane \citep{Behrend1938}. John's variational approach \citep{John1948} and subsequent work, including that of Danzer, Laugwitz, and Lenz \citep{DanzerLaugwitzLenz1957}, underlie the modern theory. \citet{GulerGurtuna2007} give an account of existence, uniqueness, contact conditions, symmetry, and examples, together with a discussion of the terminology.

The argument uses three ingredients from this theory. The maximum-volume ellipsoid provides a normalization, its maximality provides a trace constraint, and a sufficient contact certificate verifies the cone. The necessity part of the full John contact-point characterization is not used. This permits the elementary, self-contained treatment in \cref{app:ellipsoids}.

\paragraph{Ordinary volume and centroid cuts.}
For ordinary volume, \citet[Theorem~2 and its proof]{Grunbaum1960} establishes a sharp inequality for cuts through the centroid $g(K)$. Applying it to the two closed halfspaces determined by a hyperplane through $g(K)$ gives
\[
 \gamma_n\le\frac{\vol(K\cap H)}{\vol(K)}\le1-\gamma_n,
 \qquad \gamma_n=\left(\frac n{n+1}\right)^n.
\]
The upper bound follows from the lower bound for the opposite halfspace, and both dimension-dependent constants are sharp. Since $\gamma_n>e^{-1}$ and $\gamma_n\to e^{-1}$, the optimal dimension-independent upper factor for these ordinary-volume ratios is $1-e^{-1}$. This result controls $\vol(K\cap H)/\vol(K)$ at the centroid, whereas \cref{thm:main} controls $w(K\cap H)/w(K)$ at the center $c(K)$ of the maximum-volume inscribed ellipsoid. The two centers need not coincide; the constants therefore concern different volume functionals and cutting rules.

\subsection{Cutting-plane geometry and center computation}
The method of inscribed ellipsoids of \citet{TarasovKhachiyanErlikh1988} uses exact or sufficiently accurate approximate centers to localize solutions of convex optimization problems. Their Russian original states the central-cut factor $0.843$, displays a recursive estimate $0.8429\ldots$, and gives the cone limit $\sqrt e/2$. It also proves the weaker $8/9$ bound as an intermediate step. Khachiyan's 1990 paper gives its own $0.844\ldots$ proof and explicitly proposes the sharp-constant conjecture \citep{Khachiyan1990}. The intermediate construction and its recursive use therefore have established precedents. The result here comes from the quantitative trace comparison and the simultaneous use of the two resulting bounds.

The contraction factor depends on the position of the cutting hyperplane. \citet{PrimakKheyfets1995} obtain a factor $1/2$ in a modified method using a cut tangent to the current maximum-volume ellipsoid. Its hypothesis differs from the central-cut condition of \cref{thm:main}. That smaller factor is therefore not a bound for the problem considered here.

Computing the center is a separate optimization task. \citet{KhachiyanTodd1993} analyze approximation through a sequence of maximal-inscribed-paraboloid problems. \citet{Anstreicher2002} improves the complexity of inscribed-ellipsoid approximation, and \citet{ZhangGao2003} study numerical formulations and practical primal-dual methods. More recently, \citet{Sun2026} analyzes a polarity iteration using minimum-volume covering ellipsoids of shifted polars, with instance-dependent convergence. These works address the cost or convergence of finding an ellipsoid; the present theorem addresses the geometric effect of a cut after an exact center is known.

For approximate centers, \citet[Lemma~1]{WadaFujisaki2010} use a factor $8/(9\eta^2)$ when the available ellipsoid has volume at least an $\eta$ fraction of the optimum. Our trace constraint uses exact maximality. Replacing that approximate-center factor by an expression involving $\sqrt e/2$ would require a separate argument. Broader accounts of Khachiyan's geometric and algorithmic contributions are given by \citet{BorosGurvich2008} and \citet{ElbassioniDumitrescu2017}.

\subsection{Trace inequalities and the directional estimate}
For suitable trace functions, the dependence of an increment on the matrix baseline admits a noncommutative monotonicity principle. \citet{BouhtouGaubertSagnol2010} use strong subadditivity of fractional trace powers in experimental design. \citet[Proposition~2.3]{Sagnol2013} proves, for a continuous $f$ on $[0,\infty)$ with operator-antitone derivative on $(0,\infty)$,
\begin{equation}\label{eq:known-SSA}
 \tr f(X+Y+Z)+\tr f(Z)
 \le\tr f(X+Z)+\tr f(Y+Z)
\end{equation}
for positive semidefinite arguments. For $f(x)=x^p$, $0<p<1$, the derivative $px^{p-1}$ is operator antitone. Recent discussion and further examples appear in \citet{Niculescu2025}; the fractional-power statement is proved here in \cref{app:matrix}.

\Cref{lem:directional} uses this antitonicity and trace-calculus mechanism in a quantitative directional setting. The baseline comparison with $A^2$ and the correlated vector $b=Au$ preserve the displacement information needed by the geometric application. The estimate is stated independently of ellipsoids, so it may be studied as a matrix tool in other problems. Its application here is precise: one square-root estimate and two telescoping estimates yield the pair of constraints that the diagonal model suggested.

\subsection{Further directions}\label{sec:discussion}
The dimension-independent question is settled by the matching upper bound and cone limit. The best constant for each prescribed dimension remains undetermined by this argument. The near-equality condition in \cref{cor:near-equality} controls only the smallest semiaxis; a geometric stability result would also need information about the other axes, the displacement, and the body. A robust analogue for approximate centers would connect the exact theorem more directly to implementations of cutting-plane methods.

The argument relies on the interaction of two spectral constraints. The diagonal model identifies their form, matrix analysis establishes them for arbitrary displacements, and convex analysis determines the parameter range controlled by each bound. Their combination yields the dimension-independent constant.

\paragraph{Proof provenance and use of AI.}
The proof was discovered by an AI language model in response to a research problem posed by the first author. The development included the matrix estimates, literature comparisons, exact scalar calculations, and numerical error checks. AI assistance was also used to develop the Lean~4 formalization. The resulting formal proofs have been checked by Lean's kernel, with the scope and audit described in \cref{app:formal-verification}. The human authors are responsible for the mathematical interpretation, literature attribution, and final submitted manuscript.

\label{page:main-end}

\clearpage
\appendix
\section{Basic facts on maximal inscribed ellipsoids}\label{app:ellipsoids}

This appendix supplies the existence, uniqueness, and certificate facts used in the main text. It does not require the necessity part of the general contact-point characterization.

\begin{proposition}\label{prop:exist-unique}
Every convex body $K\subset\R^n$ contains a unique maximum-volume ellipsoid.
\end{proposition}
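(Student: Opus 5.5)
Existence comes from compactness and uniqueness from strict log-concavity of the determinant. I parametrize ellipsoids as $\E(a,A)$ with $A=A^\top\succ0$, which covers every nonsingular ellipsoid by the remark after the affine normalization. I then consider the feasible set
\[
 \mathcal F=\{(a,A):A\succeq0,\ a+A\B\subset K\},
\]
allowing degenerate $A$. Containment is equivalent to the support-function inequalities $v^\top a+\|Av\|\le h_K(v)$ for all $v$, by \eqref{eq:support}. Each such inequality is closed and convex in $(a,A)$, because $(a,A)\mapsto v^\top a+\|Av\|$ is convex. Hence $\mathcal F$ is closed and convex. It is bounded: $a\in K$, and every semiaxis is at most $\operatorname{diam}K$. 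Since $K$ has nonempty interior, it contains a small ball, so $\sup_{\mathcal F}\det A>0$. Because $\det$ is continuous on the compact set $\mathcal F$, a maximizer $(a_*,A_*)$ exists with $\det A_*>0$. Finally, the volume of $\E(a,A)$ is $\det A\cdot\vol(\B)$, and no other ellipsoid representation can do better, so $w(K)$ is attained.

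For uniqueness, I suppose $(a_0,A_0)$ and $(a_1,A_1)$ are both maximizers with $A_0,A_1\succ0$, and take the midpoint $(\bar a,\bar A)=\bigl(\tfrac{a_0+a_1}2,\tfrac{A_0+A_1}2\bigr)$. Convexity of $\mathcal F$ gives $(\bar a,\bar A)\in\mathcal F$. This can also be seen directly, since the point $\bar a+\bar Ay$ is the average of $a_0+A_0y$ and $a_1+A_1y$, both of which lie in $K$.

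The key step is strict concavity of $\log\det$ on positive definite matrices. Writing $A_1=A_0^{1/2}BA_0^{1/2}$ with eigenvalues $\mu_i$ of $B$, we get
\[
 \log\det\bar A-\tfrac12(\log\det A_0+\log\det A_1)=\sum_i\Bigl(\log\tfrac{1+\mu_i}{2}-\tfrac12\log\mu_i\Bigr)\ge0,
\]
with equality only if all $\mu_i=1$, that is, $A_0=A_1$. Maximality then forces $A_0=A_1=:A$.

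It remains to show $a_0=a_1$, and this is the main obstacle, because translating the center leaves the volume unchanged and the determinant argument gives nothing. My plan is to normalize $A=I$ by an affine map and reduce to two balls $a_0+\B$ and $a_1+\B$ inside $K$, with $d=a_1-a_0\neq0$. The convex hull $\operatorname{conv}\bigl((a_0+\B)\cup(a_1+\B)\bigr)\subset K$ is a capsule. It contains the ellipsoid centered at $\bar a$ with semiaxis $1+\|d\|/2$ along $d$ and $1$ in the orthogonal directions, scaled down slightly to fit. To make this precise I intend to use the intermediate-ellipsoid construction. I apply \cref{lem:intermediate} after translating so that $a_0=0$, taking $S=I$ and $b=d$. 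This gives $\E\bigl(d/2,(I+dd^\top/4)^{1/2}\bigr)\subset K$, whose determinant is $(1+\|d\|^2/4)^{1/2}>1$, contradicting maximality. The lemma only needs $\B\subset K$ and $\E(b,S)\subset K$, so it applies directly after the normalization. Hence $a_0=a_1$, and the maximizing ellipsoid is unique.
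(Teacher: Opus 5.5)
Your proof is correct and follows the paper's argument: existence from compactness of the closed, bounded, convex feasible set; $A_0=A_1$ from strict concavity of $\log\det$; and $a_0=a_1$ by applying \cref{lem:intermediate} to two translated unit balls, exactly as the paper does. The one difference is cosmetic: you check strict concavity at the midpoint via the congruence $A_1=A_0^{1/2}BA_0^{1/2}$ and AM--GM on the eigenvalues of $B$, instead of the paper's second-derivative computation (this matches the ``spectral midpoint argument'' the paper says its Lean formalization uses).
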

\begin{proof}
Consider pairs $(a,A)$ with $A\succeq0$ and $a+A\B\subset K$. This set is closed. Moreover, $a\in K$, and for every unit vector $u$, the two points $a\pm Au$ lie in $K$, so $2\|Au\|\le\operatorname{diam}K$. Thus the feasible pairs form a compact set. The determinant attains a maximum, which is positive because $K$ contains a ball of positive radius. A maximizing matrix is therefore positive definite.

The feasible set of pairs is convex, as follows by taking pointwise convex combinations with the same $y\in\B$. Suppose $(a_0,A_0)$ and $(a_1,A_1)$ both maximize the determinant. Strict concavity of $\log\det$ on the positive definite cone implies $A_0=A_1$. For completeness, along $S(t)=A_0+tD$, with $D=A_1-A_0\ne0$,
\[
 \frac{d^2}{dt^2}\log\det S(t)
 =-\tr\bigl(S(t)^{-1}D S(t)^{-1}D\bigr)<0,
\]
since the trace on the right is the squared Frobenius norm of $S(t)^{-1/2}D S(t)^{-1/2}$.

If the centers were distinct, an affine normalization would give two maximizing ellipsoids $\B$ and $b+\B$, with $b\ne0$, inside the transformed body. \Cref{lem:intermediate}, which does not assume maximality, produces an ellipsoid with shape $(I+bb^\top/4)^{1/2}$. Its determinant is $\sqrt{1+\|b\|^2/4}>1$, a contradiction. Thus the centers agree as well.
\end{proof}

\begin{lemma}[Sufficient contact certificate]\label{lem:contact-certificate}
Suppose $\B\subset K$ and there are unit vectors $u_i$ and positive weights $c_i$ such that
\[
 K\subset\{x:u_i^\top x\le1\}\quad\text{for every }i,
 \qquad \sum_i c_i u_i=0,\quad \sum_i c_i u_i u_i^\top=I.
\]
Then $J(K)=\B$.
\end{lemma}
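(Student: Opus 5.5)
We prove the sufficient contact certificate by a direct volume comparison. Let $E=\E(a,A)\subset K$ be any ellipsoid with $A\succ0$. We show $\det A\le1$, with equality only when $A=I$ and $a=0$. Existence and uniqueness of the maximizer (\cref{prop:exist-unique}) together with $\B\subset K$ then give $J(K)=\B$.

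\emph{Step 1: support inequalities.} Each $u_i$ defines a supporting halfspace of $K$. By the support function formula~\eqref{eq:support}, containment of $E$ gives
\[
 u_i^\top a+\|Au_i\|\le1\qquad\text{for every }i.
\]

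\emph{Step 2: aggregate with the weights.} Taking the trace of $\sum_i c_iu_iu_i^\top=I$ gives $\sum_i c_i=n$. Now use $\|Au_i\|\ge u_i^\top Au_i$, which is Cauchy--Schwarz with $\|u_i\|=1$. Multiply each support inequality by $c_i$ and sum. The first-moment condition $\sum_ic_iu_i=0$ kills the center term, and
\[
 \sum_i c_i u_i^\top Au_i=\tr\Bigl(A\sum_ic_iu_iu_i^\top\Bigr)=\tr A.
\]
Hence $\tr A\le\sum_i c_i=n$.

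\emph{Step 3: AM--GM.} By the arithmetic--geometric mean inequality, $\det A\le(\tr A/n)^n\le1$. So no inscribed ellipsoid has larger volume than $\B$, and $\B$ is a maximizer.

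Uniqueness of the maximizer from \cref{prop:exist-unique} already yields $J(K)=\B$. The equality analysis below is an independent check.

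\emph{Equality case.} If $\det A=1$, equality in AM--GM forces $A=I$. Every weighted inequality must then be tight, so $u_i^\top a+1=1$, that is, $u_i^\top a=0$ for all $i$. Since $\sum_ic_iu_iu_i^\top=I$, the $u_i$ span $\R^n$, and therefore $a=0$.

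No step is a serious obstacle. The only care needed is in Step 2: the bound $\|Au_i\|\ge u_i^\top Au_i$ must run in the right direction, and the weights must be positive so that the inequalities can be summed.
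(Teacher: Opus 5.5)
Your proof is correct and follows the same route as the paper's argument. You use the support inequalities $u_i^\top a+\|Au_i\|\le1$ and the bound $u_i^\top Au_i\le\|Au_i\|$, then take a weighted sum to get $\tr A\le n$, apply AM--GM, and conclude with uniqueness from \cref{prop:exist-unique}; your equality-case analysis is a valid extra that the paper omits, since uniqueness already settles $J(K)=\B$.
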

\begin{proof}
For every $\E(a,A)\subset K$,
\[
 u_i^\top a+\|Au_i\|\le1,
 \qquad u_i^\top Au_i\le\|Au_i\|.
\]
Multiply by $c_i$ and sum. The assumed identities give $\tr A\le\sum_i c_i=n$. The arithmetic--geometric mean inequality yields $\det A\le(\tr A/n)^n\le1$. Since the ball is feasible, it is maximizing, and uniqueness follows from \cref{prop:exist-unique}.
\end{proof}

This is the sufficient direction of the standard contact certificate used in extremal-ellipsoid theory; see \citet{GulerGurtuna2007}. In \cref{sec:cone-proof}, all supporting directions and weights are given explicitly.

\section{Fractional trace powers and diminishing increments}\label{app:matrix}

We give the matrix facts used in \cref{sec:estimates}. They are classical; integral proofs are included to specify the order and differentiability assertions without a commutativity assumption.

\begin{lemma}\label{lem:power-order}
If $0\prec X\preceq Y$, then $Y^{-1}\preceq X^{-1}$. For $0<\theta<1$,
\[
 X^\theta\preceq Y^\theta,
 \qquad X^{-\theta}\succeq Y^{-\theta}.
\]
\end{lemma}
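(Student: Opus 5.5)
The plan is to prove the three assertions in order, each from the previous one, working directly with quadratic forms and the spectral calculus. No commutativity of $X$ and $Y$ is assumed.

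\emph{Inverse.} From $0\prec X\preceq Y$ I conjugate by $Y^{-1/2}$ to get $Y^{-1/2}XY^{-1/2}\preceq I$. The eigenvalues of $Y^{-1/2}XY^{-1/2}$ therefore lie in $(0,1]$. Since $Z=Y^{-1/2}XY^{-1/2}$ is positive definite with spectrum in $(0,1]$, its inverse has spectrum in $[1,\infty)$, so $Z^{-1}\succeq I$. Now $Z^{-1}=Y^{1/2}X^{-1}Y^{1/2}$. Conjugating back by $Y^{-1/2}$ gives $X^{-1}\succeq Y^{-1}$.

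\emph{Positive powers.} For $0<\theta<1$ I use the integral representation
\[
 x^\theta=\frac{\sin\pi\theta}{\pi}\int_0^\infty s^{\theta-1}\frac{x}{x+s}\,ds
 =\frac{\sin\pi\theta}{\pi}\int_0^\infty s^{\theta-1}\Bigl(1-\frac{s}{x+s}\Bigr)ds,\qquad x>0.
\]
By spectral calculus it gives
\[
 X^\theta=\frac{\sin\pi\theta}{\pi}\int_0^\infty s^{\theta-1}\bigl(I-s(X+sI)^{-1}\bigr)\,ds,
\]
with the integral converging in operator norm. For each $s>0$ we have $X+sI\preceq Y+sI$, so the inverse step gives $(X+sI)^{-1}\succeq(Y+sI)^{-1}$. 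Hence the integrand for $X$ is dominated by the integrand for $Y$ in the Loewner order. The weight $s^{\theta-1}\sin(\pi\theta)/\pi$ is positive, and the Loewner order is preserved under positive integration because it is tested by $v^\top(\cdot)v$ pointwise. Therefore $X^\theta\preceq Y^\theta$.

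\emph{Negative powers.} I combine the two previous steps. Since $X^\theta\preceq Y^\theta$ and $X^\theta\succ0$, the inverse step gives $Y^{-\theta}=(Y^\theta)^{-1}\preceq(X^\theta)^{-1}=X^{-\theta}$. This uses the spectral-calculus identity $(X^\theta)^{-1}=X^{-\theta}$.

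The main obstacle is the middle step: justifying the integral representation and its convergence, both at $s\to0$ and at $s\to\infty$, uniformly over the spectrum. Near $0$ the integrand is $O(s^{\theta-1})$. Near $\infty$ it is $O(s^{\theta-2}\|X\|)$, since $I-s(X+s)^{-1}=X(X+s)^{-1}$. Both bounds are integrable for $0<\theta<1$. The scalar identity itself reduces, via the substitution $s=xr$, to the Beta integral $\int_0^\infty r^{\theta-1}(1+r)^{-1}dr=\pi/\sin\pi\theta$.

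As an alternative for $\theta$, I could prove the result for $\theta=1/2$ directly (if $X^{1/2}\not\preceq Y^{1/2}$, an eigenvector argument on $Y^{1/2}-X^{1/2}$ yields a contradiction) and extend to dyadic exponents. I would avoid this route because it reaches general $\theta$ only through a continuity argument.
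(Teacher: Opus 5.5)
Your proof is correct and follows the paper's argument. The inverse comparison is a congruence: you conjugate by $Y^{-1/2}$, while the paper writes $Y=X^{1/2}CX^{1/2}$ with $C\succeq I$. The positive-power comparison integrates the resolvent inequality $(X+sI)^{-1}\succeq(Y+sI)^{-1}$ against the same representation $X^\theta=\frac{\sin\pi\theta}{\pi}\int_0^\infty s^{\theta-1}\bigl(I-s(X+sI)^{-1}\bigr)\,ds$.

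The only difference is the negative-power step. You invert $X^\theta\preceq Y^\theta$, whereas the paper applies the resolvent comparison to a separate integral formula for $X^{-\theta}$. Both are valid, and yours avoids the second integral. The paper reuses that formula later, in the trace-derivative lemma.
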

\begin{proof}
Write $Y=X^{1/2}CX^{1/2}$ with $C\succeq I$. Then $C^{-1}\preceq I$ and $Y^{-1}=X^{-1/2}C^{-1}X^{-1/2}\preceq X^{-1}$.

The scalar beta integral, transferred to positive definite matrices by orthogonal diagonalization, gives
\begin{align}
 X^\theta&=\frac{\sin(\pi\theta)}\pi
 \int_0^\infty s^{\theta-1}X(X+sI)^{-1}\,ds,\label{eq:positive-power-integral}\\
 X^{-\theta}&=\frac{\sin(\pi\theta)}\pi
 \int_0^\infty s^{-\theta}(X+sI)^{-1}\,ds.\label{eq:negative-power-integral}
\end{align}
These integrals converge in norm in finite dimension. The negative-power comparison follows immediately from the inverse comparison. For positive powers, use $X(X+sI)^{-1}=I-s(X+sI)^{-1}$ and integrate the resulting order inequality with positive weights.
\end{proof}

\begin{lemma}[Trace differentiation]\label{lem:trace-derivative}
Let $0<p<1$, $X\succ0$, and let $H$ be symmetric. Then
\begin{equation}\label{eq:trace-derivative}
 D\bigl[\tr(X^p)\bigr](H)=p\,\tr(X^{p-1}H).
\end{equation}
\end{lemma}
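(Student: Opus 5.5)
We derive the formula from the integral representation~\eqref{eq:positive-power-integral} of \cref{lem:power-order}, which reduces everything to resolvents; their Fréchet derivatives are explicit and need no commutativity. For $s>0$ write $X(X+sI)^{-1}=I-s(X+sI)^{-1}$. Then
\[
 \tr(X^p)=\frac{\sin(\pi p)}\pi\int_0^\infty s^{p-1}\tr\bigl(I-s(X+sI)^{-1}\bigr)\,ds .
\]
The resolvent identity
\[
 (X+H+sI)^{-1}-(X+sI)^{-1}=-(X+H+sI)^{-1}H(X+sI)^{-1}
\]
shows that $X\mapsto(X+sI)^{-1}$ is differentiable on the positive definite cone, with derivative $H\mapsto-(X+sI)^{-1}H(X+sI)^{-1}$. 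Cyclicity of the trace turns the derivative of the integrand into
\[
 s^{p}\,\tr\bigl((X+sI)^{-2}H\bigr).
\]
This is the only point where the trace is essential: it lets the two resolvent factors merge even though $X$ and $H$ need not commute.

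Next we justify differentiating under the integral sign. Fix $X\succ0$ with $\lambda_{\min}(X)=\mu$, and restrict to symmetric $H$ with $\|H\|\le\mu/2$. Then $X+H\succeq(\mu/2)I$, so $\|(X+H+sI)^{-1}\|\le(s+\mu/2)^{-1}$. The second-order remainder of the resolvent is
\[
 (X+H+sI)^{-1}H(X+sI)^{-1}H(X+sI)^{-1}.
\]
Its trace norm is at most $n\|H\|^2(s+\mu/2)^{-3}$, so the remainder of the integrand is bounded by $n\|H\|^2 s^{p}(s+\mu/2)^{-3}$. This is integrable on $(0,\infty)$ for $0<p<1$: near $0$ it behaves like $s^p$, and at infinity like $s^{p-3}$. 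Hence the remainder is $O(\|H\|^2)$ uniformly, and the candidate linear term is also given by a convergent integral. This establishes Fréchet differentiability, with
\[
 D[\tr(X^p)](H)=\frac{\sin(\pi p)}\pi\int_0^\infty s^{p}\,\tr\bigl((X+sI)^{-2}H\bigr)\,ds .
\]

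It remains to identify this linear functional with $p\tr(X^{p-1}H)$. Diagonalize $X=Q\Lambda Q^\top$ and put $\tilde H=Q^\top HQ$. Then
\[
 \tr\bigl((X+sI)^{-2}H\bigr)=\sum_i(\lambda_i+s)^{-2}\tilde H_{ii},
\]
so it suffices to prove the scalar identity
\[
 \frac{\sin(\pi p)}\pi\int_0^\infty\frac{s^p}{(\lambda+s)^2}\,ds=p\lambda^{p-1},\qquad \lambda>0 .
\]
Differentiating the scalar version of~\eqref{eq:positive-power-integral}, $\lambda^p=\frac{\sin(\pi p)}\pi\int_0^\infty s^{p-1}\frac{\lambda}{\lambda+s}\,ds$, in $\lambda$ gives exactly this, because $\frac{d}{d\lambda}\frac{\lambda}{\lambda+s}=\frac{s}{(\lambda+s)^2}$. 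The interchange of derivative and integral is justified locally by dominated convergence. Alternatively, substitute $s=\lambda r$ and use the beta integral directly. Summing over $i$ returns $p\sum_i\lambda_i^{p-1}\tilde H_{ii}=p\tr(X^{p-1}H)$.

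The main obstacle is the uniform integrable bound on the remainder, which is needed to pass the derivative through an integral over an unbounded range with a singular weight. The restriction $\|H\|\le\mu/2$, together with the $(s+\mu/2)^{-3}$ decay, handles it.
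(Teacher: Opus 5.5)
Your proposal is correct and follows the paper's proof. Both arguments differentiate the trace of the resolvent representation~\eqref{eq:positive-power-integral}, use cyclicity to reach $\frac{\sin(\pi p)}{\pi}\int_0^\infty s^p\tr((X+sI)^{-2}H)\,ds$, and reduce to the scalar identity in an eigenbasis of $X$. The differences are minor: you bound the second-order resolvent remainder explicitly by $n\|H\|^2 s^p(s+\mu/2)^{-3}$ instead of invoking the paper's locally uniform domination, and you obtain the scalar identity by differentiating the positive-power integral in $\lambda$ (or via the beta integral) rather than by integrating by parts from the negative-power integral.
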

\begin{proof}
Take the trace in the integral representation~\eqref{eq:positive-power-integral} and differentiate in the direction $H$. With $R_s=(X+sI)^{-1}$, differentiation of $I-sR_s$ and cyclicity of the trace give
\[
 D\bigl[\tr(X^p)\bigr](H)
 =\frac{\sin(\pi p)}\pi\int_0^\infty s^p\tr(R_s^2H)\,ds.
\]
Differentiation under the integral is valid locally uniformly on the positive definite cone: the integrand is bounded by an integrable multiple of $s^p$ near zero and $s^{p-2}$ at infinity. In an eigenbasis of $X$, the scalar identity
\[
 \frac{\sin(\pi p)}\pi\int_0^\infty
 \frac{s^p}{(\lambda+s)^2}\,ds=p\lambda^{p-1}
\]
proves the trace identity~\eqref{eq:trace-derivative}. The scalar identity follows by integration by parts from the negative-power integral. Repeated eigenvalues cause no difficulty.
\end{proof}

In particular, setting $H=bb^\top$ proves the derivative formula used in \cref{lem:directional}. This trace identity does not imply that the full derivative of $X^p$ is multiplication by $pX^{p-1}$.

\begin{proposition}[Diminishing trace increments]\label{prop:diminishing}
Let $0<p<1$, $0\prec X\preceq Y$, and $H\succeq0$. Then
\begin{equation}\label{eq:diminishing}
 \tr((X+H)^p)-\tr(X^p)
 \ge\tr((Y+H)^p)-\tr(Y^p).
\end{equation}
Equivalently, for $U,V,W\succeq0$,
\begin{equation}\label{eq:SSA-powers}
 \tr((U+V+W)^p)+\tr(W^p)
 \le\tr((U+W)^p)+\tr((V+W)^p).
\end{equation}
\end{proposition}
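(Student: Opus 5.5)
The plan is to prove the diminishing-increment inequality~\eqref{eq:diminishing} by integrating a directional derivative along a path, and then obtain~\eqref{eq:SSA-powers} as a reformulation. For the first step, fix $0\prec X\preceq Y$ and $H\succeq0$. Consider, for $s\in[0,1]$,
\[
 \phi(s)=\tr((X+sH)^p)-\tr((Y+sH)^p).
\]
Both arguments stay positive definite, so \cref{lem:trace-derivative} applies. It gives
\[
 \phi'(s)=p\,\tr\bigl(((X+sH)^{p-1}-(Y+sH)^{p-1})H\bigr).
\]

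Since $X+sH\preceq Y+sH$ and $p-1\in(-1,0)$, \cref{lem:power-order} with $\theta=1-p$ gives $(X+sH)^{p-1}\succeq(Y+sH)^{p-1}$. The trace of the product of a positive semidefinite matrix with $H\succeq0$ is nonnegative, because $\tr(DH)=\tr(H^{1/2}DH^{1/2})\ge0$. Hence $\phi'\ge0$ and $\phi(1)\ge\phi(0)$, which is exactly~\eqref{eq:diminishing}. The derivative formula needs continuity of $s\mapsto\phi'(s)$ for the fundamental theorem of calculus. This follows from the local uniform differentiability established in the proof of \cref{lem:trace-derivative}, or equivalently from continuity of the integral representation in $s$.

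For the equivalence with~\eqref{eq:SSA-powers}, take $X=W$, $Y=U+W$ and $H=V$. Then~\eqref{eq:diminishing} rearranges directly to~\eqref{eq:SSA-powers}. Conversely, given $X\preceq Y$, set $W=X$, $U=Y-X$ and $V=H$ to recover~\eqref{eq:diminishing}.

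The only real obstacle is the semidefinite case. Here $W$ may be singular, so matrices such as $W$ or $V+W$ are not positive definite and the derivative formula fails at the endpoints. I would handle this by approximation. Replace $W$ by $W+\varepsilon I$, apply the positive definite case, and let $\varepsilon\to0$, using continuity of $X\mapsto\tr(X^p)$ on the positive semidefinite cone. That continuity holds because eigenvalues depend continuously on the matrix and $z\mapsto z^p$ is continuous on $[0,\infty)$.
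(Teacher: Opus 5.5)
Your proof is correct and is essentially the paper's argument: you differentiate a trace difference along an affine path via \cref{lem:trace-derivative}, sign the derivative with the antitonicity of $x^{p-1}$ and $\tr(DH)\ge0$, and regularize $W$ to $W+\varepsilon I$ for the semidefinite case. The only difference is that your path moves the perturbation $sH$, whereas the paper's moves the baseline $X+t(Y-X)$; these are mirror images under the $U\leftrightarrow V$ symmetry of \eqref{eq:SSA-powers}, and the mean value theorem already suffices without continuity of $\phi'$.
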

\begin{proof}
Put $Z=Y-X\succeq0$ and
\[
 g(t)=\tr((X+tZ+H)^p)-\tr((X+tZ)^p),\qquad 0\le t\le1.
\]
By \cref{lem:trace-derivative,lem:power-order},
\[
 g'(t)=p\,\tr\left(\bigl[(X+tZ+H)^{p-1}-(X+tZ)^{p-1}\bigr]Z\right)\le0.
\]
Indeed, the bracket is negative semidefinite, so its product with $Z$ has nonpositive trace, as is seen by conjugating with $Z^{1/2}$. Hence $g(0)\ge g(1)$, proving inequality~\eqref{eq:diminishing}. Taking $X=W$, $Y=V+W$, $H=U$ proves inequality~\eqref{eq:SSA-powers} for $W\succ0$. Replacing $W$ by $W+\varepsilon I$ and letting $\varepsilon\downarrow0$ extends it to the stated semidefinite case. The reverse implication follows from the same substitutions.
\end{proof}

The proposition is the fractional-power case of the established strong-subadditivity principle in \citet{BouhtouGaubertSagnol2010,Sagnol2013}; see also \citet[Corollary~4]{Niculescu2025}. Its proof is included for self-containment, not as a separate novelty claim. The explicit lower bound in \cref{lem:directional} additionally uses the comparison with $A^2$ and the correlated vector $b=Au$.

\section{A resolvent proof of the directional estimate}\label{app:resolvent}

An alternative proof of \cref{lem:directional} separates the quantitative directional comparison from a general rank-one integral identity.

Under the assumptions of that lemma, adding $sI$ to the matrix comparison~\eqref{eq:M-comparison} and then inverting gives, for $s>0$,
\begin{align}
 b^\top(M+sI)^{-1}b
 &\ge u^\top A^2\left(\frac m{\alpha^2}A^2+sI\right)^{-1}u\notag\\
 &\ge\frac{\alpha^2}{m+s}.\label{eq:resolvent-bound}
\end{align}
For the second inequality, diagonalize $A$ and use monotonicity of
\[
 \lambda\longmapsto\frac{\lambda^2}{(m/\alpha^2)\lambda^2+s}
\]
on $(0,\infty)$. Every eigenvalue of $A$ is at least $\alpha$, and $\|u\|=1$. The first comparison makes no assumption that $M$ commutes with $A$.

Let $N=M+tbb^\top$. For $0<p<1$, the rank-one identity is
\begin{equation}\label{eq:rank-one-integral}
 \tr(N^p)-\tr(M^p)
 =\frac{p\sin(\pi p)}\pi\int_0^\infty
 s^{p-1}\log\!\left(1+t\,b^\top(M+sI)^{-1}b\right)\,ds.
\end{equation}
To establish this identity, take the trace difference in the integral representation~\eqref{eq:positive-power-integral}. Writing
\[
 L(s)=\log\det(N+sI)-\log\det(M+sI),
\]
the result is
\[
 \tr(N^p)-\tr(M^p)
 =-\frac{\sin(\pi p)}\pi\int_0^\infty s^p L'(s)\,ds.
\]
Integration by parts replaces the right-hand side by $p\sin(\pi p)/\pi$ times $\int_0^\infty s^{p-1}L(s)\,ds$. The determinant lemma identifies
\[
 L(s)=\log\!\left(1+t\,b^\top(M+sI)^{-1}b\right),
\]
which proves identity~\eqref{eq:rank-one-integral}. All boundary terms vanish: $L(s)$ is bounded as $s\downarrow0$, while $L(s)=O(s^{-1})$ as $s\to\infty$, and $0<p<1$.

Finally insert the resolvent bound~\eqref{eq:resolvent-bound} into the integral identity~\eqref{eq:rank-one-integral}. The resulting lower bound is
\[
 \frac{p\sin(\pi p)}\pi\int_0^\infty
 s^{p-1}\log\!\left(1+\frac{t\alpha^2}{m+s}\right)\,ds
 =(m+t\alpha^2)^p-m^p,
\]
where the equality is the scalar case of the same integral identity. This proves inequality~\eqref{eq:directional} without differentiating along the perturbation path.

\clearpage
\section{Formal verification}\label{app:formal-verification}

The main theorem and the supporting mathematical results have been formalized in Lean~4 \citep{deMouraUllrich2021} using mathlib \citep{Mathlib2020}. The development covers actual ellipsoids in Euclidean space and Lebesgue volume, including existence and uniqueness of the maximal ellipsoid, affine normalization, the central-cut bound in every positive dimension, and the cone construction proving uniform optimality. It also verifies the finite-dimensional estimates, nonattainment, the exact one-dimensional ratio, the near-equality constraints, and the iteration bounds.

\paragraph{Statements and representations.}
The geometric space is \texttt{EuclideanSpace} over $\R$ with coordinates indexed by \texttt{Fin n}. Shape matrices are real and positive definite, matrix comparisons use the Loewner order, and fractional powers use continuous functional calculus. The volume functional is the supremum of the Lebesgue volumes of feasible ellipsoids; positivity, finiteness, and attainment are proved. Thus the final geometric theorem has the hypotheses of \cref{thm:main}, without additional trace constraints or commutativity assumptions. Selected declaration names are listed below; each belongs to the namespace \texttt{Khachiyan}.

\begin{center}
\small
\begin{tabular}{@{}p{0.43\textwidth}p{0.53\textwidth}@{}}
\toprule
Result & Lean declaration \\
\midrule
Geometric upper bound & \path{t01} \\
Uniform optimality & \path{sharp_constant} \\
Existence and uniqueness & \path{existsUnique_isMaxDet} \\
Directional estimate: trace proof & \path{RankOne.trace_increment_lower_bound} \\
Directional estimate: resolvent proof & \path{Resolvent.trace_increment_lower_bound} \\
Finite-dimensional nonattainment & \path{normalized_det_lt_rStar}, \path{t03} \\
Near equality and repeated cuts & \path{t04}, \path{t05} \\
\bottomrule
\end{tabular}
\end{center}

\paragraph{Independent routes and verification scope.}
Both proofs of \cref{lem:directional} are formalized for every $0<p<1$, every $t\ge0$, and arbitrary unit directions. The resolvent route shares spectral and order foundations with the trace route. Its actual import closure and transitive proof dependencies exclude the trace-derivative and primary rank-one modules, and its final theorem has the same hypotheses and conclusion. The formalization uses equivalent implementations of some intermediate calculations: a spectral midpoint argument proves uniqueness, and the integral formulas use the positive normalization $(\int_0^\infty s^{p-1}/(1+s)\,ds)^{-1}$. The explicit evaluation of this normalization as $\sin(\pi p)/\pi$ is not a separate verified theorem. The verification claim concerns the mathematical statements and their proofs, rather than a line-by-line transcription of every displayed calculation or the historical survey.

\paragraph{Reproducibility and trusted foundations.}
The development fixes Lean \texttt{v4.35.0-rc2} and mathlib commit \path{0a6c8e0355da0405d616f80b9f8232c4fab2cc5b}. A fresh verification on September~23, 2026 rebuilt the project and its pinned library dependencies from source in a clean checkout, using the installed Lean toolchain. The complete audit checked 598 declarations; a separate audit checked all 43 public declarations of the resolvent route. Their axiom dependencies contain only Lean's standard foundational axioms \texttt{propext}, \texttt{Classical.choice}, and \texttt{Quot.sound}. No proof placeholder, project-specific axiom, or external numerical oracle occurs in the certified chain.

\paragraph{Source availability.}
The formal-verification source is publicly available at \url{https://github.com/DrZhouKarl/KhachiyanEllipsoidConjecture}. The repository includes the Lean sources, pinned version files, a statement map, and \path{scripts/verify.py}, which reruns the build, the axiom and independence audits, and the comparison of the two rank-one theorem statements. The published sources and version files match the audited development byte for byte.

\clearpage
\bibliographystyle{plainnat}
\bibliography{references}
\end{document}